\newtheorem{theorem}{Theorem}
\newtheorem{lemma}[theorem]{Lemma}
\newtheorem{proposition}[theorem]{Proposition}
\numberwithin{equation}{section}
\newcommand{\R}{\mathbb{R}}  
\newcommand{\A}{\mathcal{A}}  
\newcommand{\G}{\mathcal{G}}  
\newcommand{\fG}{\mathfrak{G}}  
\newcommand{\aG}{\vec{\G}} 
\newcommand{\faG}{\vec{\fG}} 
\newcommand{\cA}{\mathcal{A}}  
\newcommand{\sL}{\mathscr{L}} 
\newcommand{\sA}{\mathscr{A}} 
\newcommand{\sT}{\mathscr{T}}  
\newcommand{\sTpol}{\mathscr{T}_{\text{pol}}}
\newcommand{\sTlin}{\mathscr{T}_{\text{lin}}}
\newcommand{\Z}{\mathcal{Z}}
\newcommand{\gm}{\mathsf{g}}  
\newcommand{\X}{\mathcal{X}}  
\renewcommand{\L}{\mathcal{L}}  
\newcommand{\sfT}{\mathsf{T}}  
\newcommand{\tid}{t_{\text{id}}}  
\newcommand{\wt}{\widetilde}  
\newcommand{\TxX}{\sfT_x\X}  
\newcommand{\cTxX}{\sfT^*_x\X}  
\newcommand{\E}{\mathcal{E}}  
\begin{document}

\title{On Accelerated Methods in Optimization}
\author{Andre Wibisono \\ \texttt{wibisono@berkeley.edu}
\and
Ashia C. Wilson \\ \texttt{ashia@berkeley.edu}}

\maketitle

\begin{abstract}
In convex optimization, there is an {\em acceleration} phenomenon in which we can boost the convergence rate of certain gradient-based algorithms.
We can observe this phenomenon in Nesterov's accelerated gradient descent, accelerated mirror descent, and accelerated cubic-regularized Newton's method, among others.
In this paper, we show that the family of higher-order gradient methods in discrete time (generalizing gradient descent) corresponds to a family of first-order rescaled gradient flows in continuous time. On the other hand, the family of {\em accelerated} higher-order gradient methods (generalizing accelerated mirror descent) corresponds to a family of second-order differential equations in continuous time, each of which is the Euler-Lagrange equation of a family of Lagrangian functionals.
We also study the exponential variant of the Nesterov Lagrangian, which corresponds to a generalization of Nesterov's restart scheme and achieves a linear rate of convergence in discrete time.
Finally, we show that the family of Lagrangians is closed under time dilation (an orbit under the action of speeding up time), which demonstrates the universality of this Lagrangian view of acceleration in optimization. 
\end{abstract}

\section{Introduction}
\label{Sec:Intro}
In convex optimization, many discrete-time algorithms can be interpreted as discretizing a continuous-time curve converging to the optimal solution $f^*$ of the optimization problem:
\begin{align*}
 \min_{x\in \X} f(x).
\end{align*}
For example, the classical {\em gradient descent algorithm} in discrete time ($k \in \{0,1,2,\dots\})$
\begin{align}\label{Eq:GradDesc}
x_{k+1} = x_k - \epsilon \nabla f(x_k)
\end{align}
can be viewed as the algorithm obtained by applying the backward-Euler method to discretize {\em gradient flow} ($t\geq 0$)
\begin{align}\label{Eq:GradFlow}
\dot X_t = -\nabla f(X_t).
\end{align}
Many methods, including~\eqref{Eq:GradDesc} and~\eqref{Eq:GradFlow} above, can be interpreted as minimizing a regularized approximation of the objective function $f$.
Indeed, gradient descent can be written as:
\begin{align}\label{Eq:GradDescOpt}
x_{k+1} &= x_k + v_k \notag\\
v_k  &= \arg\min_{v } \left\{ f(x_k) + \langle \nabla f(x_k), v \rangle + \frac{1}{\epsilon} \cdot \frac{1}{2} \|v\|^2 \right\}
\end{align}
whereas gradient flow can be written as:
\begin{align}\label{Eq:GradFlowOpt}
\dot X_t = \arg\min_{v} \left\{ f(X_t) + \langle \nabla f(X_t), v \rangle + \frac{1}{2} \|v\|^2 \right\}.
\end{align}
Moreover, \eqref{Eq:GradDescOpt} and \eqref{Eq:GradFlowOpt} have matching convergence rates; gradient descent has convergence rate
\begin{align}\label{Eq:GradDescRate}
f(x_k) - f^* \leq  O\Big(\frac{1}{\epsilon k}\Big)
\end{align}
when $f$ is smooth (has $(1/\epsilon)$-Lipschitz gradients) and convex, where the $O(\frac{1}{\epsilon k})$ term in the bound above is with respect to $k \to \infty$, with $\epsilon$ fixed (more precisely, $f(x_k)-f^* \le \frac{C}{\epsilon k}$ for all sufficiently large $k$, for some constant $C > 0$).
Similarly, gradient flow has convergence rate
\begin{align}\label{Eq:GradFlowRate}
f(X_t) - f^* \leq O \left(\frac{1}{t}\right)
\end{align}
when $f$ is convex, without requiring smoothness, and the $O(\frac{1}{t})$ term above is with respect to $t \to \infty$. Note that the backward-Euler method discretizes the curve using the identification $x_k = X_t$, $x_{k+1} = X_{t + \delta} \approx X_t + \delta \dot X_t = x_k + v_k$, with the time-step $\delta$ set equal to the step size $\epsilon$ of the discrete time algorithm (equivalently, with time scaling $t = \epsilon k$).

\subsection{Summary of Results}
\label{Sec:IntroTime}
The link between discrete-time algorithms and continuous-time curves, and their matching properties (i.e. convergence rates) extends far beyond gradient descent~\eqref{Eq:GradDesc} and gradient flow~\eqref{Eq:GradFlow}. We begin (Section~\ref{Sec:HigherGrad}) by studying \emph{higher-order gradient} algorithms $\G_p$ ($p\geq 2$): 
\begin{align}\label{Eq:HigherOrder1}
x_{k+1} &= x_k + v_k \notag\\
v_k &= \arg \min_{v} \,\left\{ f_{p-1}(v;x_k) + \frac{1}{\epsilon} \cdot \frac{1}{p} \|v\|^p \right\},
\end{align}
where $f_{p-1}(v;x)$ is the $(p-1)$-st Taylor approximation of $f(x+v)$ centered at $x$:
\begin{align}\label{Eq:HigherTaylor}
f_{p-1}(v,x_k) \,=\, \sum_{i=0}^{p-1} \frac{1}{i!}f(x_k) v^i \,=\, f(x_k) + \langle \nabla f(x_k),v\rangle + \dots +\frac{1}{(p-1)!}\nabla^{p-1}f(x_k)[v,\dots,v].
\end{align} 
The $p$-th order gradient method $\G_p$, with the ansatz $x_k = X_t$, $x_{k+1} = X_{t + \delta} \approx X_t + \delta \dot X_t = x_k + v_k$, and time-step $\delta = \epsilon^{\frac{1}{p-1}}$ (equivalently, with time scaling $t = \epsilon^{\frac{1}{p-1}} k)$, discretizes a $p$-th order \emph{rescaled gradient flow}: 
\begin{equation}\label{Eq:GradFlow1}
\dot X_t = \arg\min_{v} \left\{f(X_t)+ \langle \nabla f(X_t), v\rangle + \frac{1}{p} \|v\|^p \right\}.
\end{equation}
Note \eqref{Eq:GradFlow1} can also be written as:
\begin{align}\label{Eq:RescGradFlow1}
\dot X_t = \frac{\nabla f(X_t)}{||\nabla f(X_t)||_*^{\frac{p-2}{p-1}}}.
\end{align}
 Furthermore, \eqref{Eq:HigherOrder1} and \eqref{Eq:GradFlow1} have matching convergence rates; the $p$-th order gradient method has convergence rate:
 \begin{equation*}
 f(x_k) - f^* \leq O\Big(\frac{1}{\epsilon k^{p-1}}\Big)
 \end{equation*}
 when $\nabla^{(p-1)}f$ is $\frac{(p-1)!}{\epsilon}$-smooth and $f$ is convex, and the rescaled gradient flow has convergence rate:
  \begin{equation*}
 f(X_t) - f^* \leq O\Big(\frac{1}{t^{p-1}}\Big)
 \end{equation*}
 when $f$ is convex. 
 
 In Section~\ref{Sec:AccHigherGrad}, we present an algorithm which generalizes accelerated gradient descent \cite{Nesterov04} and the accelerated Newton method \cite{Nesterov08}, and accelerates the family of higher-order gradient algorithms \eqref{Eq:HigherOrder1}. Building on the work of Su, Boyd, and Candes~\cite{SuBoydCandes14} (for the $p=2$ Euclidean case), in Section~\ref{Sec:DiscToCts} we show that the $p$-th order accelerated gradient method $\aG_p$ discretizes a second-order differential equation we call \emph{Nesterov flow}:
 \begin{equation*}
\ddot X_t + \frac{p+1}{t} \dot X_t + Cp^2t^{p-2} \nabla^2 h\left(X_t+\frac{t}{p}\dot X_t\right)^{-1} \nabla f(X_t) = 0
 \end{equation*} 
 under the time step $\delta = \epsilon^{\frac{1}{p}}$ (or time scaling $t^p = \epsilon k^p$).
 Moreover, the $p$-th order accelerated gradient algorithm $\aG_p$ and its corresponding Nesterov flow have matching convergence rates;  the $p$-th order accelerated gradient method has convergence rate:
 \begin{equation*}
 f(x_k) - f^* \leq O\Big(\frac{1}{\epsilon k^{p}}\Big)
 \end{equation*}
 when $\nabla^{(p-1)}f$ is $\frac{(p-1)!}{\epsilon}$-smooth and $f$ is convex, and the corresponding Nesterov flow has convergence rate (Section~\ref{Sec:NestFlowRate}):
  \begin{equation*}
 f(X_t) - f^* \leq O\Big(\frac{1}{t^{p}}\Big)
 \end{equation*}
 when $f$ is convex.  Note that the family of Nesterov flows are \emph{second-order} differential equations in time and the rescaled gradient flows are \emph{first-order} differential equations in time. 
 
 In Section~\ref{Sec:Lag}, we show that the Nesterov flows are a subfamily of the \emph{Bregman flows}: 
\begin{equation*}
\ddot X_t + \dot \gamma_t \, \dot X_t + e^{\beta_t} \, \nabla^2 h\left(X_t + e^{-\alpha_t} \dot X_t\right)^{-1} \nabla f(X_t) = 0
\end{equation*}
where $\alpha_t = -\log t + \log p$, $\beta_t = (p-2)\log t + 2\log p + \log C$, and $\gamma_t =\, (p+1) \log t -\log p$. Under an \emph{ideal scaling} relationship between $\alpha_t, \beta_t,\gamma_t$ (satisfied by the Nesterov flows), each Bregman flow satisfies the Euler-Lagrange equation of a \emph{Bregman Lagrangian} functional:
\begin{align}\label{Eq:BregLag1}
\L_{\alpha,\beta,\gamma}(x,v,t) \,=\, e^{\gamma_t}\left(e^{2\alpha_t} D_h\left(x+e^{-\alpha_t} v, x\right) - e^{\beta_t} f(x) \right).
\end{align}
Therefore, the family of Nesterov flows~\eqref{Eq:NestFlow} can be interpreted as optimal curves under the {\em principle of least action}, which posits that curves evolve so as to minimize a quantity known as an \emph{action}, defined as the time integral of a Lagrangian functional $\L(X,\dot X,t)$.

In Section~\ref{Sec:expLag}, we introduce \emph{exp-Nesterov flows}, another subfamily of the Bregman flows that satisfies the ideal scaling (where $\alpha_t \,=\, \log c,\, \beta_t \,=\, ct + 2\log c,\, \gamma_t \,=\, ct-\log c $). The exp-Nesterov flows have an improved convergence rate:
\[ f(X_t) - f^* \leq O\Big(\frac{1}{e^{ct}}\Big). \]
In Section~\ref{Sec:NestRest}, we show how to discretize the exp-Nesterov flow, and with the additional assumption of uniform convexity, obtain a discrete-time algorithm with matching linear rate. The algorithm presented generalizes the restart scheme of Nesterov~\cite{Nesterov08}, and is optimal (attains  the lower bound~\cite[Section~2.2.1]{Nesterov04}) when $f$ is both smooth and strongly convex (i.e. $p=2$).
 
Finally, in Section~\ref{Sec:Time} we demonstrate how \emph{time} can be used as an organizing tool to understand the various algorithms presented in this paper. Indeed, in continuous time optimization, if we start with a curve $X_t$ with convergence rate $f(X_t) - f^* \le O(e^{\rho_t})$, we can simply consider the sped-up curve $Y_t = X_{\tau(t)}$, where $\tau \colon \R_+ \rightarrow \R_+$ is a monotonically increasing function. This new curve $Y_t$ has faster convergence rate $f(Y_t) - f^* \le O(e^{\rho_{\tau(t)}})$, where $\rho_{\tau(t)} \geq \rho_t$. In this paper, we explore groups of \emph{time dilation} functions $\tau$ and their corresponding group action on the space of curves. We show that the family of Bregman Lagrangian functionals forms an orbit under the group action of time dilation; moreover, the family of Nesterov Lagrangian functionals (Section~\ref{Sec:NestFlow}) and the family of exp-Nesterov Lagrangian functionals (Section~\ref{Sec:ExpConv}) form isomorphic sub-orbits. We can therefore interpret the curves corresponding to family of accelerated methods $\faG$ as the result of speeding up (or traversing faster) the \emph{single} curve corresponding to accelerated gradient descent. The cost  for translating these faster curves into discrete-time algorithms (in addition to significant computational costs) is increasingly restrictive smoothness assumptions on the function.

We summarize in Figure~\ref{Fig:Summary} the relations between the objects we study in this paper. We see a consistent, almost parallel structure between continuous time (top layer) and discrete time (bottom layer). As the key conceptual message of this paper, we find there is a big difference in the nature of first-order equations (such as gradient flow) and second-order equations (such as accelerated gradient flow), due to the connection to the Lagrangian framework. Working with second-order equations provides better results in both continuous and discrete time.
\begin{sidewaysfigure}[!ph]
    \includegraphics[width=\textwidth]{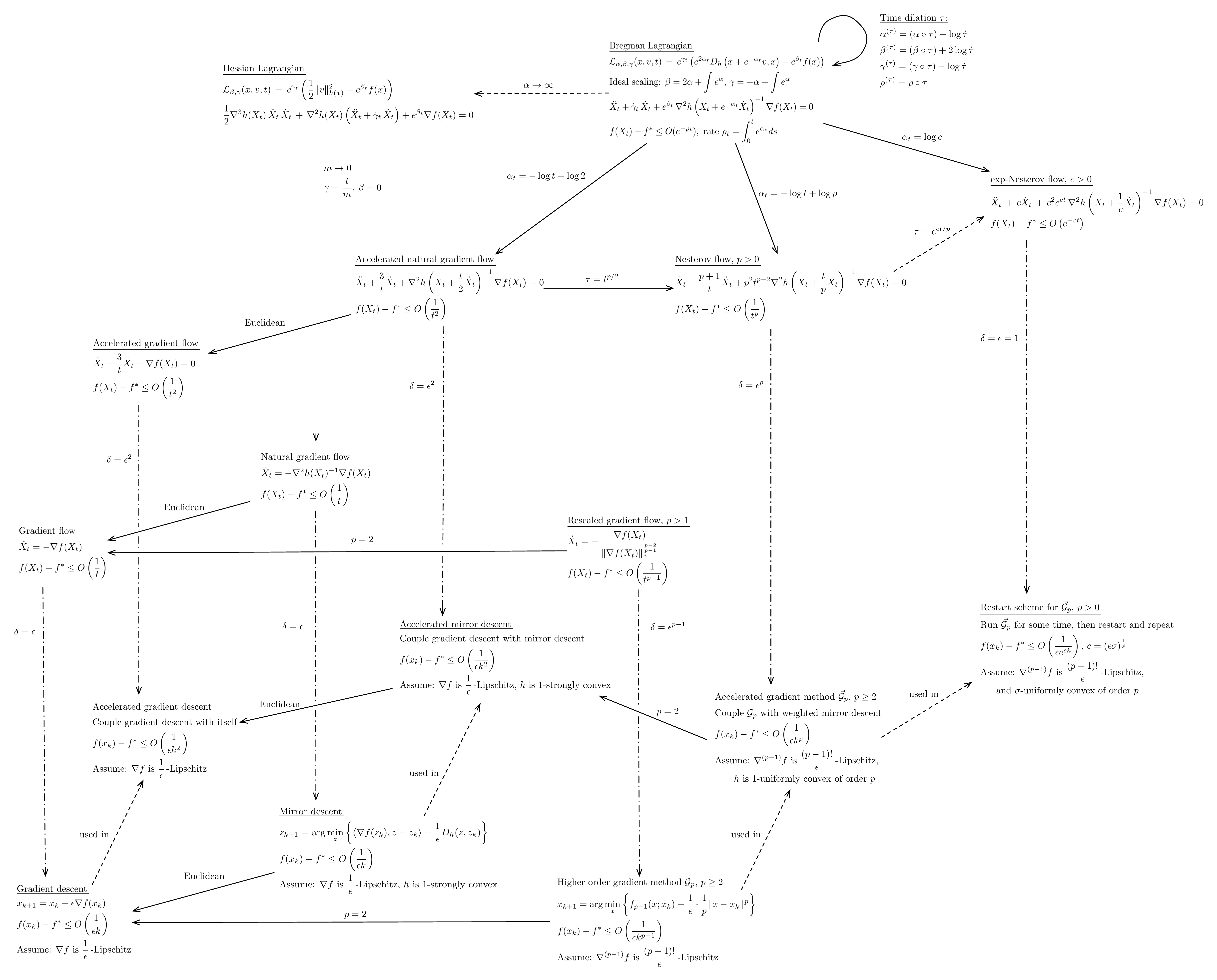}
    \caption{{\small Top layer is continuous time (first and second order equations), bottom layer is discrete time (first and second order algorithms).}}
    \label{Fig:Summary}
\end{sidewaysfigure}

\subsection{Notation and Preliminaries}
\label{Sec:Notation}
We formalize our setting and recall some basic definitions.
Our objective is to minimize a  {\em convex} objective function $f \colon \X \to \R$, which means the graph of $f$ lies above any tangent hyperplanes:
\begin{align}\label{Eq:Conv}
f(y) \ge f(x) + \langle \nabla f(x), y-x \rangle 
\end{align}
or equivalently, any intermediate value is at most the average value ({\em Jensen's inequality}):
\begin{align*}
f\big(\lambda x + (1-\lambda) y\big) \,\le\, \lambda f(x) + (1-\lambda) f(y) 
\end{align*}
for all $x,y \in \X$ and $0 \le \lambda \le 1$.
We assume $f$ is smooth and continuously differentiable as many times as necessary.

%
The domain $\X$ is an open convex subset of a vector space, say $\X \subseteq \R^d$ (we take $\X = \R^d$ for simplicity). In particular, we take the point of view that $\X$ is a manifold. 
%
%
%
In particular, any point $x \in \X$ has a {\em tangent space} $\TxX$ (vector space of instantaneous displacements $v$ such that $x+\varepsilon v \in \X$ for small $\varepsilon > 0$);
Since $\X \subseteq \R^d$, we can identify $\TxX$ with $\X$ (or $\R^d$) itself. But now we can have an interesting mixing of timescales between the points $x \in \X$ and the vectors $v \in \TxX$, which are now ``promoted'' from $\epsilon$ to the standard timescale. Indeed, we will see in Section~\ref{Sec:NestFlow} that Nesterov's acceleration technique involves the choice of mixing $x$ and $v$ using $\varepsilon = \frac{t}{p}$---which is counterintuitive because it is increasing, and yields sublinear rate of convergence. On the other hand, the exponential variant of Nesterov uses $\varepsilon = \frac{1}{c}$---which is more reasonable, and yields linear rate of convergence.

The {\em cotangent space} $\cTxX$ is the dual vector space to $\TxX$ (the space of linear functional $\phi$ on $\TxX$). 
For example, the {\em gradient} $\nabla f(x) \in \cTxX$ is a covector, which defines the directional derivative of $f$: 
\begin{align}\label{Eq:DirDer}
\langle \nabla f(x), v \rangle \,\equiv\, f'(x;v) \,:=\, \lim_{\epsilon \to 0} \frac{f(x+\epsilon v) - f(x)}{\epsilon}.
\end{align}
Because we use the $\ell_2$-norm, we can identify $\TxX \cong \cTxX$ by the identity map (which we implicitly use in~\eqref{Eq:GradDesc}); but note that conceptually, they are different spaces. Similarly, the dual norm $\|\cdot\|_*$ is also the same $\ell_2$-norm, but we will maintain the distinction of $\|\cdot\|_*$.

We say $f$ is {\em $L$-smooth of order $\ell \ge 0$} if $\nabla^\ell f$ is $L$-Lipschitz (and $\nabla^{(\ell+1)}$ is continuous):
\begin{align}\label{Eq:Smooth}
\|\nabla^\ell f(x) - \nabla^\ell f(y)\|_\ast \,\le\, L\|x-y\|.
\end{align}
The case $\ell = 0$ means $f$ is Lipschitz, and $\ell = 1$ is the usual smoothness definition ($\nabla f$ is Lipschitz).
We say $h$ is {\em $\sigma$-uniformly convex of order $p \ge 2$} ($p=2$ is strong convexity) if:
\begin{align}\label{Eq:UC}
D_h(y,x) \,\ge\, \frac{\sigma}{p} \|y-x\|^p.
\end{align}
where 
$D_h(y,x) = h(y) - h(x) - \langle \nabla h(x), y-x \rangle \ge 0$ is the Bregman divergence induced by a strictly convex {\em distance generating function} $h \colon \X \to \R$.

\subsection{Gradient algorithms}
\label{Sec:IntroGrad}

We review gradient-based algorithms that correspond to first-order equations in continuous time. 

\subsubsection{Mirror descent and natural gradient flow}
The {\em mirror descent} algorithm
\begin{align}\label{Eq:MD}
x_{k+1} &\,=\, \arg\min_v \,\left\{ \langle \nabla f(x_k), x \rangle + \frac{1}{\epsilon} D_h(x,x_k) \right\}
\end{align}
which can be written as
\begin{align*} 
   x_{k+1} &= x_k + v_k \notag\\
   v_k &=  \arg\min_{v} \left\{ f(x_k) + \langle \nabla f(x_k), v \rangle + \frac{1}{\epsilon} \,D_h(x_k + v, x_k) \right\} 
\end{align*}
measures displacement by the Bregman divergence.
Note that gradient descent~\eqref{Eq:GradDescOpt} takes $h(x) = \frac{1}{2}\|x\|^2$, $\X = \R^d$, and the classical multiplicative weight update uses $h(x) = -\sum x_i \log x_i$, $\X$ is the probability simplex.
In general, there are often suitable choices of $h$ that confer some computational gain in practice (e.g., milder dimension dependence). 
Similar to~\eqref{Eq:GradDescRate}, mirror descent has convergence rate
\begin{align}\label{Eq:MirrDescRate}
f(x_k) - f^* \leq O \left(\frac{1}{\epsilon k}\right)
\end{align}
when $\nabla f$ is $\frac{1}{\epsilon}$-Lipschitz, and $h$ is strongly convex (i.e., uniformly convex of order $2$).

In continuous time, mirror descent corresponds to (with $\delta = \epsilon$, $t = \epsilon k$) {\em natural gradient flow}:
\begin{align}\label{Eq:NatGradFlow1}
\dot X_t = -\,\nabla^2 h(X_t)^{-1} \, \nabla f(X_t)
\end{align}
which can be cast as the solution to the optimization problem
\begin{align}\label{Eq:NatGradFlowOpt1}
\dot X_t = \arg\min_{v} \left\{ f(X_t) + \langle \nabla f(X_t), v \rangle + \frac{1}{2} \|v\|_{h(X_t)}^2 \right\}
\end{align}
Natural gradient flow is a steepest descent direction when the metric in $\X$ is induced by the Hessian $\nabla^2 h$. Thus, mirror descent~\eqref{Eq:MD} can be seen as an alternative discretization of the natural gradient flow, another being Amari's~\cite{Amari1988} {\em natural gradient descent}\footnote{This equivalence has also been observed by~\cite{Raskutti15}; see Appendix~\ref{App:MD} for further discussion.}:
\begin{align}\label{Eq:NatGradDesc1}
x_{k+1} = x_k - \epsilon \nabla^2 h(x_k)^{-1} \nabla f(x_k).
\end{align}
Similar to \eqref{Eq:NatGradFlowOpt1}, we can interpret \eqref{Eq:NatGradDesc1} as the solution to the optimization problem
\begin{align*}
x_{k+1} ~&=~ x_k + v_k \notag\\
v_k & = \arg\min_{v} \left\{ f(x_k) + \langle \nabla f(x_k), v \rangle + \frac{1}{\epsilon} \cdot \frac{1}{2} \|v\|_{h(x_k)}^2 \right\}.  
\end{align*}
where $\|v\|_{h(x)}^2 = \langle \nabla^2 h(x) v, v \rangle$ is the Hessian norm induced by $h$. Furthermore, like gradient flow~\eqref{Eq:GradFlowRate}, natural gradient flow has convergence rate
\begin{align}\label{Eq:NatGradFlowRate}
f(X_t) - f^* \leq O \left(\frac{1}{t}\right)
\end{align}

\subsubsection{Newton's method and Newton's flow}
 {\em Newton's method} optimizes a quadratic approximation of the objective function $f$:
  \begin{align}\label{Eq:NewtReg}
 x_{k+1} = \arg\min_x \,\left\{ f(x_k) + \langle \nabla f(x_k), x \rangle + \frac{1}{2\epsilon} \langle \nabla^2 f(x_k)(x-x_k), x-x_k \rangle\right\}
  \end{align}
can be written explicitly as
\begin{align}\label{Eq:Newt}
x_{k+1} = x_k - \epsilon \nabla^2 f(x_k)^{-1} \, \nabla f(x_k).
\end{align}
The original Newton's method corresponds to $\epsilon = 1$, but there have been many proposed choices of step size $\epsilon$ to improve stability and ensure convergence (e.g., see~\cite{NesterovPolyak06} and references therein).

We can also interpret~\eqref{Eq:Newt} as natural gradient descent~\eqref{Eq:NatGradDesc1} when $h=f$. Thus, in continuous time it corresponds to {\em Newton's flow}:\footnote{Note, Newton's flow is explicitly solvable: $X_t = \nabla f^*(e^{-t} \nabla f(X_0))$, where $f^*$ is the convex dual of $f$}
\begin{align}\label{Eq:NewtFlow}
\dot X_t = -\nabla^2 f(X_t)^{-1} \nabla f(X_t)
\end{align}
which is natural gradient flow \eqref{Eq:NatGradFlow1} with $f = h$.
However, convergence results for the scheme~\eqref{Eq:Newt} are difficult to obtain and have all been local. Only in special cases (e.g., self-concordance, a local Lipschitz condition on $\nabla^2 f$) are we able to have any strong guarantee on Newton's method.

\subsubsection{Cubic-regularized Newton's method and rescaled gradient flow}
To address this issue, Nesterov and Polyak~\cite{NesterovPolyak06} proposed {\em cubic-regularized Newton's method}, which optimizes a second-order approximation of $f$ plus regularization:
{\fontsize{10.5}{0}
\begin{align}\label{Eq:CubNewt}
x_{k+1} \,=\, \arg\min_x \,\left\{ f(x_k) + \langle \nabla f(x_k), x-x_k \rangle + \frac{1}{2} \langle \nabla^2 f(x_k)(x-x_k), x-x_k \rangle + \frac{1}{3\epsilon} \|x-x_k\|^3 \right\}
\end{align}}
They showed~\cite[Theorem~4]{NesterovPolyak06} that if $\nabla^2 f$ is $\frac{2}{\epsilon}$-Lipschitz, then~\eqref{Eq:CubNewt} has convergence rate guarantee
\begin{align}\label{Eq:CubNewtRate}
f(x_k) - f^* \le \frac{27 \|x_0-x^*\|^2}{\epsilon (k+3)^2} = O\left(\frac{1}{\epsilon k^2} \right).
\end{align}
As mentioned in Section~\ref{Sec:IntroTime}, we show (Section~\ref{Sec:Rescaledgrad}) that in continuous time (with $\delta = \epsilon^{\frac{1}{2}}$, $t^2 = \epsilon k^2$),~\eqref{Eq:CubNewt} corresponds to the rescaled gradient flow:
\begin{align}\label{Eq:CNMFlow}
 \quad \dot X_t = \frac{\nabla f(X_t)}{\|\nabla f(X_t)\|_*^{1/2}}
\end{align}
and that~\eqref{Eq:CNMFlow} has matching convergence rate
\begin{align}\label{Eq:CNMRate}
f(X_t) - f^* \leq O \left(\frac{1}{t^2}\right).
\end{align}

Finally, we note that by adding a regularization term in Newton's method~\eqref{Eq:CubNewt}, Nesterov and Polyak changed the problem from discretizing a Newton's flow~\eqref{Eq:NewtFlow} to rescaled gradient flow~\eqref{Eq:CNMFlow}. Thus, the two variants~\eqref{Eq:Newt},~\eqref{Eq:CubNewt} differ quite a bit in continuous time.

\subsection{Accelerated gradient algorithms}
\label{Sec:NesterovAcc}

We review accelerated gradient algorithms that correspond to second-order equations in continuous time.

\subsubsection{Accelerated gradient descent}
\label{Sec:AGD}

Nesterov's accelerated gradient descent~\cite{Nesterov83,Nesterov04} improves the performance of gradient descent~\eqref{Eq:GradDesc} from $O(1/\epsilon k)$ to the optimal $O(1/\epsilon k^2)$. This gain is achieved not by strengthening the assumption on $f$, but by
incorporating the displacement $x_k - x_{k-1}$ to shift the point at which we query the gradient $\nabla f$ (thus, this method is often referred to as gradient descent ``with momentum'').
The algorithm ~\cite[(2.2.6)]{Nesterov04} maintains three sequences $\{x_k\}, \{y_k\}, \{z_k\}$ and proceeds as follows. For any $y_0 = z_0 \in \X$, $k \ge 0$:
\begin{subequations}\label{Eq:AGD}
\vspace{-8pt}
\begin{align}
x_k &= \: \tau_k \, z_k \,+\, (1-\tau_k) \, y_k,    \label{Eq:AGDx} \\
y_{k+1} &= \: x_k  - \epsilon \nabla f(x_k),    \label{Eq:AGDy}  \\
z_{k+1} &= \: z_k - \frac{\epsilon}{\tau_k} \nabla f(x_k).    \label{Eq:AGDz}
\end{align}
\end{subequations}
Here $\epsilon > 0$ is the step size, and $\tau_k \in (0,1)$ is defined recursively by $\tau_{-1} = 1$ and the rule, for $k \ge 0$:
\begin{align}\label{Eq:alpha}
\frac{\tau_k^2}{1-\tau_k} = \tau_{k-1}^2.
\end{align}
We can also see that $\tau_k = 2/k + o(1/k)$, for indeed we have $\frac{\tau_k^2}{1-\tau_k} = \frac{4/k^2}{1-2/k} = \frac{4}{k(k-2)} \approx \frac{4}{(k-1)^2} = \tau_{k-1}^2$.
Nesterov showed~\cite[Theorem~2.2.2]{Nesterov04} that when $\nabla f$ is $\frac{1}{\epsilon}$-Lipschitz, then~\eqref{Eq:AGD} satisfies: 
\begin{align}\label{Eq:AGDRate}
f(y_k) - f^* \le \frac{4\|x_0-x^*\|^2}{\epsilon \, (k+2)^2} = O\left(\frac{1}{\epsilon k^2} \right),
\end{align}
which improves the $O(1/\epsilon k)$ rate~\eqref{Eq:GradDescRate} of gradient descent, and matches the lower bound.

As pointed out by Su, Boyd and Candes~\cite{SuBoydCandes14}, in continuous time accelerated gradient descent corresponds to the second order equation:
\begin{align}\label{Eq:SBC0}
\ddot X_t + \frac{3}{t} \dot X_t + \nabla f(X_t) = 0
\end{align}
with time scaling $\delta = \epsilon^{\frac{1}{2}}$, $t^2 = \epsilon k^2$, and matching convergence rate~\cite[Theorem~3.2]{SuBoydCandes14}:
\begin{align}\label{Eq:SBC0Rate}
f(X_t) - f^* \leq O \left(\frac{1}{t^2}\right).
\end{align}


\subsubsection{Accelerated mirror descent}
\label{Sec:AMD}

In~\cite{Nesterov05}, Nesterov proposed {\em accelerated mirror descent}, which proceeds much like the Euclidean case~\eqref{Eq:AGD}, except we replace the $z$-update~\eqref{Eq:AGDz} by a (weighted) mirror descent step~\eqref{Eq:MD}.
The algorithm~\cite[(3.11)]{Nesterov05} maintains three sequences $\{x_k\},\{y_k\},\{z_k\}$ and proceeds as follows. For any $x_0 \in \X$, $k \ge 0$:
\begin{subequations}\label{Eq:AMD}
\begin{align}
y_k &= \: \arg\min_y \, \;\left\{ f(x_k) + \langle \nabla f(x_k), y-x_k \rangle + \frac{1}{2\epsilon} \|y-x_k\|^2 \right\}  \label{Eq:AMDy}   \\
z_k &= \: \arg\min_z \, \left\{ \sum_{i=0}^k \frac{i+1}{2} \big[ f(x_i) + \langle \nabla f(x_i), z-x_i \rangle \big] \,+\, \frac{1}{\epsilon}\cdot \frac{1}{\sigma} D_h(z,x_0) \right\}   \label{Eq:AMDz}   \\
x_{k+1} &= \: \frac{2}{k+3} \, z_k \,+\, \frac{k+1}{k+3} \, y_k    \label{Eq:AMDx}
\end{align}
\end{subequations}
Under the same assumption as mirror descent ($\nabla f$ is $\frac{1}{\epsilon}$-smooth and $h$ is $\sigma$-strongly convex), the accelerated version~\eqref{Eq:AMD} has improved (optimal) convergence~\cite[Theorem~2]{Nesterov05}:
\begin{align}\label{Eq:AMDRate}
f(y_k) - f^* \le \frac{4D_h(x^*,x_0)}{\epsilon \sigma (k+1)(k+2)} = O\left(\frac{1}{\epsilon k^2}\right).
\end{align}
Note, using the equivalence of mirror descent as the cascaded version of dual averaging, we can write the update~\eqref{Eq:AMDz} above recursively using the standard mirror descent algorithm:\footnote{This observation has also been reported by~\cite{AO14}.}
\begin{align}\label{Eq:AMDz2}
z_k &= \: \arg\min_z \, \left\{ \frac{k+1}{2} \langle \nabla f(x_k), z \rangle \,+\, \frac{1}{\epsilon} \cdot \frac{1}{\sigma}D_h(z,z_{k-1}) \right\}.
\end{align}
In the Euclidean case (when $h = \frac{1}{2}\|\cdot\|_2^2$ and setting $\sigma = 1$), the update~\eqref{Eq:AMDz2} above simplifies to the explicit rule $z_k = z_{k-1} - \frac{\epsilon(k+1)}{2} \nabla f(x_k)$, recovering accelerated gradient descent~\eqref{Eq:AGD}.

Similar to~\eqref{Eq:SBC0}, accelerated mirror descent corresponds to the second order equation:
\begin{align}\label{Eq:SBC0}
\ddot X_t + \frac{3}{t} \dot X_t + \nabla^2 h\left(X_t+\frac{t}{2}\dot X_t\right)^{-1} \nabla f(X_t) = 0
\end{align}
with time scaling $\delta = \epsilon^{\frac{1}{2}}$, $t^2 = \epsilon k^2$, and matching convergence rate:
\begin{align}\label{Eq:SBC0Rate}
f(X_t) - f^* \leq O \left(\frac{1}{t^2}\right).
\end{align}

\subsubsection{Accelerated cubic-regularized Newton's method}

In~\cite{Nesterov08}, Nesterov proposed the {\em accelerated cubic-regularized Newton's method}, which proceeds as~\eqref{Eq:AGD}, except we replace the $y$-update~\eqref{Eq:AGDy} by cubic-regularized Newton's method~\eqref{Eq:CubNewt}.
The algorithm~\cite[(4.8)]{Nesterov08} maintains three sequences $\{x_k\},\{y_k\},\{z_k\}$ and proceeds as follows. For any $x_0 \in \X$, $k \ge 0$:
\begin{subequations}\label{Eq:ACN}
{\fontsize{10.5}{0}
\begin{align} 
y_k &= \arg\min_y\left\{ f(x_k) + \langle \nabla f(x_k), y-x_k \rangle + \frac{1}{2} \langle \nabla^2 f(x_k)(y-x_k), y-x_k \rangle + \frac{1}{3\epsilon} \|y-x_k\|^3 \right\}   \label{Eq:ACNy}  \\  
z_k &= \arg\min_z\left\{\sum_{i=1}^k \frac{i(i+1)}{2} \big[ f(y_i) + \langle \nabla f(y_i), z-y_i \rangle \big] + \frac{2}{\epsilon} \|z-x_0\|^3 \right\},  \label{Eq:ACNz}   \\
x_{k+1} &=\: \frac{3}{k+3} z_k + \frac{k}{k+3} y_k.    \label{Eq:ACNx}
\end{align}}
\end{subequations}
Under the same assumption as the cubic-regularized Newton's method, $\nabla^2 f$ is $\frac{2}{\epsilon}$-Lipschitz, the accelerated algorithm~\eqref{Eq:ACN} has convergence rate~\cite[Theorem~2]{Nesterov08}:
\begin{align}\label{Eq:ACNRate}
f(y_k) - f^* \le \frac{14\|x_0-x^*\|^3}{\epsilon k(k+1)(k+2)} = O\left(\frac{1}{\epsilon k^3}\right).
\end{align}

As noted in Section~\ref{Sec:IntroTime}, we show (Section~\ref{Sec:DiscToCts}) the accelerated algorithm~\eqref{Eq:ACN} corresponds to the following differential equation:
\begin{align}\label{Eq:SBC1}
\ddot X_t + \frac{4}{t} \dot X_t + 9t \nabla^2 h\left(X_t+\frac{t}{3}\dot X_t\right)^{-1} \nabla f(X_t) = 0
\end{align}
with $\delta = \epsilon^{\frac{1}{3}}$, $t^3 = \epsilon k^3$, where $h(x) = \frac{1}{3}\|x\|^3$ in~\eqref{Eq:ACN}. Furthermore, \eqref{Eq:SBC1} has matching convergence rate:
\begin{align}\label{Eq:SBC1Rate}
f(X_t) - f^* \leq O \left(\frac{1}{t^3}\right).
\end{align}

\section{Higher-order gradient methods and rescaled gradient flows}
\label{Sec:HigherGrad}

We study the family of higher-order gradient methods $\G_p \in \fG$ in discrete time, which corresponds to first-order rescaled gradient flow in continuous time, with time step $\delta = \epsilon^{\frac{1}{p-1}}$, and matching convergence rate $O(1/t^{p-1}) = O(1/\epsilon k^{p-1})$. 

\paragraph{Surrogate optimization.}
Recall in discrete time, many optimization algorithms proceed by minimizing a {\em surrogate function}:\footnote{See~\cite{Lange14} and the references therein for more information on majorization-minimization principle in optimization.}
\begin{align}\label{Eq:MM}
x_{k+1} = \arg\min_{x \in \X} \: g(x; x_k).
\end{align}
Here $g(x;x_k)$ is a surrogate function that {\em majorizes} the objective function $f(x)$, which means for any reference point $x_k \in \X$ and for any point $x \in \X$, it satisfies the inequality:
\begin{align}\label{Eq:Surr}
g(x;x_k) \ge f(x)
\end{align}
and equality holds at $x = x_k$.
The property~\eqref{Eq:Surr} above implies the algorithm~\eqref{Eq:MM} is a {\em descent method}, which means the function value $f(x_k)$ decreases along the iteration of the algorithm:
\begin{align}\label{Eq:Descent}
f(x_{k+1})  \stackrel{\eqref{Eq:Surr}}{\le} g(x_{k+1}; x_k) \stackrel{\eqref{Eq:MM}}{\le} g(x_k; x_k) \,=\, f(x_k).
\end{align}
We can minimize $f$ by finding an appropriate surrogate function $g(x;x_k)$ that is more tractable to minimize, and performing the descent algorithm~\eqref{Eq:MM}.
Moreover, under various assumptions, we can quantify the decrease in the function value~\eqref{Eq:Descent}, resulting in a rate of convergence for the algorithm~\eqref{Eq:MM}.


\paragraph{Surrogate via Taylor expansion.}
A natural technique for constructing a surrogate function $g(x;y)$ is to use the Taylor approximation of $f$ at $x$ from the reference point $y$ 
{\fontsize{10.47pt}{0}
\begin{align}\label{Eq:Taylor}
f_{p-1}(x; y) \,=\, \sum_{i=0}^{p-1} \frac{1}{i!} \nabla^i f(y) (x-y)^i
\,=\, f(y) + \langle \nabla f(y), x-y \rangle + \cdots + \frac{1}{(p-1)!} \nabla^{p-1} f(y) (x-y)^{p-1}.
\end{align}}
If $f$ is $L$-Lipschitz of order $p-1$~\eqref{Eq:Smooth}, then we have a bound on the approximation error of $f_{p-1}$:
\begin{align}\label{Eq:TayErr}
|f(x) - f_{p-1}(x;y)| \,\le\, \frac{L}{p!} \|x-y\|^{p}.
\end{align}
Therefore, we have the family of (regularized) {\em Taylor surrogate functions}, for $p \ge 1$:
\begin{align}\label{Eq:SurrTay}
g_p(x;y) \,:=\, f_{p-1}(x;y) + \frac{L}{p!} \|x-y\|^{p}.
\end{align}
Note also the tangency property $g_{p}(x;x) = f_{p-1}(x;x) = f(x)$, so $g_p$ is indeed a surrogate function.

\subsection{Higher-order gradient method}
\label{Sec:HigherGrad1}

The Taylor surrogate functions~\eqref{Eq:SurrTay} give rise to {\em higher-order gradient methods} $\G_p \in \fG$, defined by the update equation:
\begin{align}\label{Eq:HigherGrad}
x_{k+1} = \arg\min_{x } \,\left\{ f_{p-1}(x;x_k) + \frac{1}{\epsilon} \cdot \frac{1}{p} \|x-x_k\|^p \right\}.
\end{align}
The $p=2$ case gives the gradient descent algorithm~\eqref{Eq:GradDesc}, and $p=3$ is Nesterov and Polyak's~\cite{NesterovPolyak06} cubic-regularized Newton's method~\eqref{Eq:CubNewt}. Note also that $p=1$ gives a constant sequence, so we only consider $p \ge 2$.
In~\eqref{Eq:HigherGrad}, we write the Lipschitz constant $L = \frac{(p-1)!}{\epsilon} = O(\frac{1}{\epsilon})$ in terms of the {\em step size} $\epsilon > 0$, representing the discretization parameter of the algorithm.\footnote{This assumption vanishes in continuous time, since $\frac{1}{\epsilon} \to \infty$ as $\epsilon \to 0$.} 
Our discussion above says that if $f$ is $\frac{(p-1)!}{\epsilon}$-smooth of order $p-1$, then the $p$-th gradient algorithm $\G_p$~\eqref{Eq:HigherGrad} is a descent method, since $f(x_{k+1}) \le f(x_k)$.

Moreover, we can use the convexity structure of $f$ to further ensure a quantitative decrease in the {\em residual} $\delta_k = f(x_k) - f^* \ge 0$:
\begin{lemma}\label{Lem:Resd}
If $f$ is convex and $\frac{(p-1)!}{\epsilon}$-smooth of order $p-1$, then the following holds for \eqref{Eq:HigherGrad}:
\begin{align}\label{Eq:Resd}
\delta_{k+1} \,\le\, \delta_k \,-\, \frac{\epsilon^{\frac{1}{p-1}}}{R^{\frac{p}{p-1}}} \cdot \delta_k^{\frac{p}{p-1}}
\end{align}
where $R = \sup_{f(x) \le f(x_0)} \|x-x^*\|$ is the {\em radius} of the level set from the initial point $x_0$.
\end{lemma}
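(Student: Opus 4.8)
The plan is to convert the majorization property of the Taylor surrogate into a one–step contraction, by evaluating the surrogate at a carefully chosen point on the segment from $x_k$ toward the minimizer $x^*$ and then optimizing over the position of that point. Concretely, since $f$ is $\frac{(p-1)!}{\epsilon}$-smooth of order $p-1$, the error bound \eqref{Eq:TayErr} with $L=\frac{(p-1)!}{\epsilon}$ gives $|f(x)-f_{p-1}(x;x_k)|\le \frac{1}{\epsilon p}\|x-x_k\|^p$, so the objective $g_p(x;x_k):=f_{p-1}(x;x_k)+\frac{1}{\epsilon p}\|x-x_k\|^p$ minimized in \eqref{Eq:HigherGrad} is a bona fide surrogate: $g_p(\cdot;x_k)\ge f$ with equality at $x_k$, and it is coercive (its degree-$p$ term dominates the degree-$(p-1)$ polynomial $f_{p-1}$), so $x_{k+1}$ exists. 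Chaining the surrogate inequality $f(x_{k+1})\le g_p(x_{k+1};x_k)$, the optimality $g_p(x_{k+1};x_k)\le g_p(x;x_k)$, and once more \eqref{Eq:TayErr}, I obtain for every $x\in\X$
\[
f(x_{k+1})\;\le\; g_p(x;x_k)\;\le\; f(x)+\frac{2}{\epsilon p}\,\|x-x_k\|^p .
\]

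Next I would bring in convexity. Because $\G_p$ is a descent method, $f(x_k)\le f(x_0)$, hence $\|x_k-x^*\|\le R$. Plugging in $x_\lambda=x_k+\lambda(x^*-x_k)$ for $\lambda\in(0,1]$, Jensen's inequality \eqref{Eq:Conv} gives $f(x_\lambda)\le f(x_k)-\lambda\,\delta_k$ while $\|x_\lambda-x_k\|=\lambda\|x^*-x_k\|\le\lambda R$, so after subtracting $f^*$,
\[
\delta_{k+1}\;\le\;\delta_k-\lambda\,\delta_k+\frac{2R^p}{\epsilon p}\,\lambda^p\qquad\text{for all }\lambda\in(0,1].
\]
Minimizing the right side in $\lambda$ — the unconstrained minimizer is $\lambda_\star=\big(\tfrac{\epsilon\,\delta_k}{2R^p}\big)^{1/(p-1)}$, at which $\lambda_\star^{p}R^p=\tfrac{\epsilon\,\delta_k}{2}\,\lambda_\star$, so the last two terms collapse to $-\tfrac{p-1}{p}\lambda_\star\delta_k$ — yields a recursion of exactly the shape $\delta_{k+1}\le\delta_k-c_p\,\tfrac{\epsilon^{1/(p-1)}}{R^{p/(p-1)}}\,\delta_k^{p/(p-1)}$ with an explicit constant $c_p>0$ depending only on $p$, which is \eqref{Eq:Resd} (the constant tightens with a more careful treatment of the affine part of $f_{p-1}$ via \eqref{Eq:Conv}).

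The one delicate point, and where I expect the real work, is the constraint $\lambda\le 1$: the optimization is legitimate only once $\delta_k$ has fallen below a threshold of order $R^p/\epsilon$. I would handle the opposite regime by hand: when $\lambda_\star>1$, taking $\lambda=1$ already gives $\delta_{k+1}\le\frac{2R^p}{\epsilon p}$ with no further assumption, and one checks this is consistent with \eqref{Eq:Resd} in that range (equivalently, a single step pushes $\delta_k$ below the threshold, after which the contraction applies). Apart from this case split and the bookkeeping of constants, the remaining manipulations are routine.
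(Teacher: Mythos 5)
Your proof matches the paper's (Appendix~\ref{App:ConvHigherGrad}) essentially line by line: the surrogate bound $f(x_{k+1}) \le f(x)+\frac{2}{\epsilon p}\|x-x_k\|^p$, the convex combination $x_\lambda$, Jensen, and the one-dimensional minimization over $\lambda$ all appear identically, and the recursion you derive with constant $c_p=\frac{p-1}{p}\,2^{-1/(p-1)}$ is exactly the paper's~\eqref{Eq:DeltaBound1}. Two caveats on your side remarks: neither your argument nor the paper's actually delivers the constant $1$ appearing in~\eqref{Eq:Resd} (the statement is silently dropping constants, so your parenthetical hope of tightening via the affine part of $f_{p-1}$ is not realized); and your concern about $\lambda\le 1$ is legitimate and is indeed glossed over in the paper, but for the intended application to Theorem~\ref{Thm:HigherGradRate} it suffices to note that plugging $x=x^*$ into the surrogate bound already gives $\delta_k\le\frac{2R^p}{\epsilon p}$ for all $k\ge 1$, which keeps $\lambda_\star\le 1$ from that point on.
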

The proof of Lemma~\ref{Lem:Resd} is in Appendix~\ref{App:ConvHigherGrad}. Using the {\em (discrete time) energy functional} $\E_k = \delta_k^{-\frac{1}{p-1}}$,  we obtain following convergence rate for $\G_p$, generalizing~\cite[Theorem~1]{Nesterov08}:
\begin{theorem}\label{Thm:HigherGradRate}
If $f$ is convex and $\frac{(p-1)!}{\epsilon}$-smooth of order $p-1$, then the following holds for \eqref{Eq:HigherGrad}:
\begin{align}\label{Eq:HigherGradRate}
f(x_k) - f^* \le \frac{(p-1)^{p-1} R^p}{\epsilon k^{p-1}} = O\left(\frac{1}{\epsilon k^{p-1}}\right).
\end{align}
\end{theorem}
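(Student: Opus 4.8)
The plan is to run the ``discrete energy'' argument indicated in the statement: use Lemma~\ref{Lem:Resd} to show that $\E_k = \delta_k^{-1/(p-1)}$ grows at least linearly in $k$, then invert. First dispose of the degenerate case. Since $\G_p$ is a descent method (as noted before Lemma~\ref{Lem:Resd}), the residuals $\delta_k = f(x_k)-f^\ast \ge 0$ are nonincreasing, so if $\delta_j = 0$ for some $j \le k$ then $\delta_k = 0$ and \eqref{Eq:HigherGradRate} holds trivially. Hence we may assume $\delta_j > 0$ for all $j \le k$, so that $\E_j$ is well defined and finite for these indices.

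Now set $q = \tfrac{p}{p-1} > 1$ (so that $q-1 = \tfrac{1}{p-1}$) and $c = \epsilon^{1/(p-1)}/R^{p/(p-1)}$, so that Lemma~\ref{Lem:Resd} reads $\delta_{j+1} \le \delta_j - c\,\delta_j^{\,q}$, equivalently $\delta_j - \delta_{j+1} \ge c\,\delta_j^{\,q} \ge 0$. The function $\phi(t) = t^{-(q-1)}$ is convex and decreasing on $(0,\infty)$, with $\phi'(t) = -(q-1)t^{-q}$, so the supporting-line inequality for convex functions gives
\[
\E_{j+1} - \E_j \;=\; \phi(\delta_{j+1}) - \phi(\delta_j) \;\ge\; \phi'(\delta_j)\,(\delta_{j+1}-\delta_j) \;=\; (q-1)\,\delta_j^{-q}\,(\delta_j - \delta_{j+1}) \;\ge\; (q-1)\,\delta_j^{-q}\cdot c\,\delta_j^{\,q} \;=\; (q-1)\,c .
\]
Telescoping from $0$ to $k$ and using $\E_0 > 0$ yields $\E_k \ge (q-1)\,c\,k = \tfrac{k}{p-1}\cdot \epsilon^{1/(p-1)}/R^{p/(p-1)}$; that is, $\delta_k^{-1/(p-1)} \ge \tfrac{k}{p-1}\cdot \epsilon^{1/(p-1)}/R^{p/(p-1)}$. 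Raising both sides to the power $-(p-1)$ (which reverses the inequality) gives $\delta_k \le (p-1)^{p-1} R^p/(\epsilon\,k^{p-1})$, which is exactly \eqref{Eq:HigherGradRate}.

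\textbf{Main obstacle.} Essentially all the analytic content is in Lemma~\ref{Lem:Resd} (the one-step ``progress'' bound, whose proof is deferred to Appendix~\ref{App:ConvHigherGrad}); granting it, the theorem is just this short energy/telescoping argument. The only points requiring care are the exponent bookkeeping ($q-1 = \tfrac{1}{p-1}$, $q = \tfrac{p}{p-1}$) and the degenerate case $\delta_k = 0$, where $\E_k$ is not defined; both are handled above. A clean way to package the telescoping step is as the elementary lemma: if a nonnegative sequence satisfies $a_{j+1} \le a_j - c\,a_j^{\,q}$ with $q>1$, $c>0$, then $a_k^{-(q-1)} \ge a_0^{-(q-1)} + (q-1)c\,k$; the convexity computation above is precisely its proof.
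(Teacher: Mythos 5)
Your proposal is correct, and the overall strategy (form the energy $\E_k=\delta_k^{-1/(p-1)}$, derive a per-step lower bound on $\E_{k+1}-\E_k$ from Lemma~\ref{Lem:Resd}, telescope, and invert) is the same as the paper's. The one genuinely different ingredient is how you bound the increment $\E_{k+1}-\E_k$: you apply the supporting-line inequality for the convex function $\phi(t)=t^{-(q-1)}$, whereas the paper (Appendix~\ref{App:ConvHigherGrad}) uses the algebraic factorization $a^{p-1}-b^{p-1}=(a-b)\sum_{i=0}^{p-2}a^i b^{p-2-i}$ with $a=\delta_k^{1/(p-1)}$, $b=\delta_{k+1}^{1/(p-1)}$, and then bounds the resulting sum using monotonicity of $\delta_k$. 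Your convexity argument is cleaner, packages nicely as the elementary discrete ODE lemma you state at the end, and — unlike the paper's factorization — does not require $p-1$ to be an integer. You also explicitly dispose of the degenerate case $\delta_j=0$, which the paper leaves implicit. One caveat worth noting: you work from the statement of Lemma~\ref{Lem:Resd} and correctly recover the theorem's constant $(p-1)^{p-1}$; the paper's own Appendix derivation actually carries extra factors (a $2$ and a $\tfrac{p-1}{p}$) inherited from its intermediate bound~\eqref{Eq:DeltaBound1}, so its displayed constant would come out as $2p^{p-1}$ rather than $(p-1)^{p-1}$. That is an internal bookkeeping inconsistency in the paper, not a flaw in your argument, but it means your constant depends on taking the Lemma's statement at face value.
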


Thus, the family of higher-order gradient methods $\G_p \in \fG$ in discrete time has a nice sequential structure. In particular, there is a consistent pattern whereby as $p$ progresses in $\{2,3,\dots\}$, the polynomial convergence rate $O(1/\epsilon k^{p-1})$ decreases, but at the cost of increasingly strict $(p-1)$-st order smoothness assumption on $f$. This suggests there is a fundamental tradeoff in discrete time between speed of convergence and strength of hypothesis required.


\subsection{Rescaled gradient flow}
\label{Sec:Rescaledgrad}
In continuous time, gradient flow~\eqref{Eq:GradFlow} is a member ($p=2$) of a family of {\em rescaled gradient flows}:
\begin{align}\label{Eq:RescGradFlow}
\dot X_t = -\,\frac{\nabla f(X_t)}{\|\nabla f(X_t)\|_*^{\frac{p-2}{p-1}}}.
\end{align}
When $\nabla f(X_t) = 0$, we define the right hand side of~\eqref{Eq:RescGradFlow} to be $0$. Note that~\eqref{Eq:RescGradFlow} implies that the magnitude of the velocity at some time is proportional to (some power of) the gradient at that point:
\begin{align}\label{Eq:RescGradFlow3}
\|\dot X_t\| = \|\nabla f(X_t)\|_*^{\frac{1}{p-1}}
\end{align}
so we can also equivalently write the rescaled gradient flow as
\begin{align}\label{Eq:RescGradFlow2}
\|\dot X_t\|^{p-2} \, \dot X_t = -\nabla f(X_t).
\end{align}
Furthermore, \eqref{Eq:RescGradFlow2} is the optimality condition for the following optimization problem:
\begin{align}\label{Eq:RescGradFlowOpt}
\dot X_t = \arg\min_v \,\left\{ f(X_t)+ \langle \nabla f(X_t), v \rangle + \frac{1}{p} \|v\|^p \right\}.
\end{align}
Thus, we can view the rescaled gradient flow~\eqref{Eq:RescGradFlow} as a generalization of gradient flow that replaces the squared norm in the optimization interpretation~\eqref{Eq:GradFlowOpt} by the $p$-th power of the norm, for $p \ge 2$.

If $X_t$ is a curve that evolves following the rescaled gradient flow~\eqref{Eq:RescGradFlow}, then, using the convexity of $f$, the {\em energy functional}:
\begin{align}\label{Eq:RescGradFlowEnergy}
\E_t = (f(X_t) - f^*)^{-\frac{1}{p-1}}
\end{align}
increases linearly over time ($\dot \E_t \ge \frac{1}{p-1} R^{-\frac{p}{p-1}}$, see Appendix~\ref{App:RescaledGrad} for details), which implies the following rate of convergence:
\begin{theorem}\label{Thm:RescaledGradFlow} If $f$ is convex with bounded level sets, then the following holds for \eqref{Eq:RescGradFlow}:
\begin{align}\label{Eq:RescGradFlowRate}
f(X_t) - f^* \le \frac{(p-1)^{p-1} R^p}{t^{p-1}} = O\left(\frac{1}{t^{p-1}}\right)
\end{align}
\end{theorem}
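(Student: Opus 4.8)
The plan is to mimic the discrete-time argument from Lemma~\ref{Lem:Resd} and Theorem~\ref{Thm:HigherGradRate} in the continuous-time setting, using the energy functional $\E_t = (f(X_t)-f^*)^{-\frac{1}{p-1}}$ and showing it grows at least linearly. First I would compute $\dot\E_t$ by the chain rule: since $\E_t = \delta_t^{-\frac{1}{p-1}}$ where $\delta_t = f(X_t) - f^*$, we get $\dot\E_t = -\frac{1}{p-1}\delta_t^{-\frac{p}{p-1}}\dot\delta_t$, and $\dot\delta_t = \langle \nabla f(X_t), \dot X_t\rangle$. Using the rescaled gradient flow equation in the form $\dot X_t = -\nabla f(X_t)/\|\nabla f(X_t)\|_*^{(p-2)/(p-1)}$, this gives $\dot\delta_t = -\|\nabla f(X_t)\|_*^{2 - \frac{p-2}{p-1}} = -\|\nabla f(X_t)\|_*^{\frac{p}{p-1}}$. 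So $\dot\E_t = \frac{1}{p-1}\,\delta_t^{-\frac{p}{p-1}}\,\|\nabla f(X_t)\|_*^{\frac{p}{p-1}} = \frac{1}{p-1}\left(\frac{\|\nabla f(X_t)\|_*}{\delta_t}\right)^{\frac{p}{p-1}}$.

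The next step is to bound $\|\nabla f(X_t)\|_*/\delta_t$ from below using convexity. From~\eqref{Eq:Conv} applied at $x = X_t$, $y = x^*$, we get $f^* \ge f(X_t) + \langle\nabla f(X_t), x^* - X_t\rangle$, i.e. $\delta_t = f(X_t) - f^* \le \langle \nabla f(X_t), X_t - x^*\rangle \le \|\nabla f(X_t)\|_* \, \|X_t - x^*\| \le \|\nabla f(X_t)\|_* \, R$, since $f(X_t) \le f(X_0)$ (the flow is a descent method, which I should verify: $\dot\delta_t = -\|\nabla f(X_t)\|_*^{p/(p-1)} \le 0$) so $X_t$ stays in the level set of radius $R$. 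Hence $\|\nabla f(X_t)\|_*/\delta_t \ge 1/R$, giving $\dot\E_t \ge \frac{1}{p-1}\,R^{-\frac{p}{p-1}}$.

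Integrating from $0$ to $t$ yields $\E_t \ge \E_0 + \frac{t}{(p-1) R^{p/(p-1)}} \ge \frac{t}{(p-1) R^{p/(p-1)}}$ (dropping the nonnegative $\E_0$ term). Inverting the definition of $\E_t$, $\delta_t = \E_t^{-(p-1)} \le \left(\frac{(p-1) R^{p/(p-1)}}{t}\right)^{p-1} = \frac{(p-1)^{p-1} R^p}{t^{p-1}}$, which is exactly~\eqref{Eq:RescGradFlowRate}.

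The main obstacle is analytic regularity rather than the inequality chain: one must ensure the curve $X_t$ is well-defined and absolutely continuous (so the fundamental theorem of calculus applies), handle the degenerate case $\nabla f(X_t) = 0$ where the flow is defined to be stationary and $\delta_t$ has already reached a limiting value, and justify differentiating $\E_t$ where $\delta_t$ could vanish (in which case $f$ has already attained its minimum and the bound holds trivially). A careful statement would restrict attention to the time interval where $\delta_t > 0$ and argue by continuity. Modulo these standard technical caveats — which mirror those implicit in Theorem~\ref{Thm:HigherGradRate} — the estimate $\dot\E_t \ge \frac{1}{p-1}R^{-\frac{p}{p-1}}$ is the whole content, and everything else is bookkeeping.
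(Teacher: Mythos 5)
Your proof is correct and follows essentially the same approach as the paper: define $\E_t = (f(X_t)-f^*)^{-1/(p-1)}$, compute $\dot\E_t$ using the flow equation, bound it below using convexity and boundedness of the level set, and integrate. The only cosmetic difference is that you apply the $\|X_t - x^*\|\le R$ bound before integrating (with an explicit justification via the descent property), whereas the paper keeps $\|X_\tau - x^*\|$ inside the integral and only then invokes boundedness of level sets; your remarks on regularity and the vanishing-gradient case are sensible caveats that the paper leaves implicit.
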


The result~\eqref{Eq:RescGradFlowRate} above matches the discrete time convergence rate~\eqref{Eq:HigherGradRate} of the $p$-th order gradient method $\G_p$. Moreover, notice we use the same energy functional~\eqref{Eq:RescGradFlowEnergy} as in discrete time.

\subsection{The relation between $\epsilon$ and $\delta$}
\label{Sec:EpsDelta}

The matching convergence rates~\eqref{Eq:HigherGradRate},~\eqref{Eq:RescGradFlowRate} suggest the following {\em time scaling} relation between continuous time $t \ge 0$ and discrete time $k \in \{2,3,\dots\}$:
\begin{align}\label{Eq:TimeScale}
t = \epsilon^{\frac{1}{p-1}} k.
\end{align}
So in one step of discrete time $k \mapsto k+1$, the continuous time $t \equiv t_k$ increments by:
\begin{align}\label{Eq:TimeScale2}
\delta \,\equiv\, \frac{dt}{dk} \,:=\, \frac{t_{k+1} - t_k}{(k+1)-k}
          \stackrel{\eqref{Eq:TimeScale}}{=}
          \epsilon^{\frac{1}{p-1}}.
\end{align}
The scaling above suggests the interpretation of the $p$-th gradient algorithm $\G_p$ as a discretization of the rescaled gradient flow with time step $\delta = \epsilon^{\frac{1}{p-1}}$.

We can understand this scaling phenomenon more explicitly by starting from the continuous time view. Suppose we have a continuous time curve $X_t$ (such as the rescaled gradient flow~\eqref{Eq:RescGradFlow}), and we want to {\em discretize} it with time step $\delta > 0$. This means we
build a discrete time sequence $x_k$ obtained by taking a snapshot of $X_t$ every $\delta$ increment of time, namely:
\begin{align}\label{Eq:CtsDisc}
X_t = x_k~~~\Rightarrow~~~X_{t+\delta} = x_{k+1}.
\end{align}
Notice, to go from $x_k$ to $x_{k+1}$ in~\eqref{Eq:CtsDisc} above, we have to first let $X_t$ evolve in continuous time to $X_{t+\delta}$, which we then use as the value for $x_{k+1}$.

However, to discretize is to build a discrete time algorithm, which means we can only define $x_{k+1}$ in terms of the previous discrete iterates $x_k, x_{k-1}, \dots$, without invoking the continuous time curve $X_t$. 
Thus, we have to approximate the continuous time evolution from $X_t$ to $X_{t+\delta}$.

For the first-order rescaled gradient flow~\eqref{Eq:RescGradFlow}, we approximate $X_{t+\delta}$ using a linear approximation:
\begin{align}\label{Eq:CtsDisc1}
X_{t+\delta} \approx X_t + \delta \dot X_t
\end{align}
with an error of order $o(\delta)$. Equivalently, we replace $\dot X_t$ in~\eqref{Eq:RescGradFlowOpt} by the discrete time difference:\footnote{We can also start by replacing $\dot X_t$ in~\eqref{Eq:RescGradFlow} by~\eqref{Eq:CtsDisc1} and achieve the same conclusion. Namely,~\eqref{Eq:RescGradFlow} becomes $\frac{1}{\delta}(x_{k+1}-x_k) = -\nabla f(x_k)/\|\nabla f(x_k)\|_*^{\frac{p-2}{p-1}}$, or equivalently, $\|x_{k+1}-x_k\|^{p-2} (x_{k+1}-x_k) = -\delta^{p-1}\nabla f(x_k)$, which is~\eqref{Eq:RescGradFlowOptDisc2}.}
\begin{align}\label{Eq:RescGradFlowOptDisc}
\frac{x_{k+1}-x_k}{\delta} = \arg\min_v \,\left\{ \langle \nabla f(x_k), v \rangle + \frac{1}{p} \|v\|^p \right\}.
\end{align}
Or, writing $v = \frac{x-x_k}{\delta}$, the above can be written as:
\begin{align}
x_{k+1} 
&= \arg\min_x \,\left\{ f(x_k) + \langle \nabla f(x_k), x-x_k\rangle + \frac{1}{\delta^{p-1}} \cdot \frac{1}{p} \|x-x_k\|^p \right\}. \label{Eq:RescGradFlowOptDisc2}
\end{align}
Thus, we obtain an algorithm similar to the $p$-th gradient method~\eqref{Eq:RescGradFlowOpt}, where the step size $\epsilon$ is given by $\delta^{p-1}$, consistent with the time scaling~\eqref{Eq:TimeScale2}.
We can interpret the $p$-th gradient method~\eqref{Eq:HigherGrad} as a particular discretization technique of the rescaled gradient flow~\eqref{Eq:RescGradFlow}, which replaces the first-order approximation of $f$ in the ``naive'' discretization~\eqref{Eq:RescGradFlowOptDisc2} by the $(p-1)$-st order approximation~\eqref{Eq:HigherGrad}. By doing so, as well as assuming $(p-1)$-st order smoothness of $f$, the resulting discrete time algorithm $\G_p$ has a $O(1/\epsilon k^{p-1})$ convergence rate which matches the $O(1/t^{p-1})$ bound in continuous time.


\section{Accelerated higher-order gradient methods}
\label{Sec:AccHigherGrad}

In Section~\ref{Sec:NesterovAcc}, we see the simple pattern in Nesterov's constructions of accelerated methods~\cite{Nesterov04,Nesterov05,Nesterov08}:
\begin{center}
{\em To accelerate an algorithm, couple it with a (suitably weighted) mirror descent step.}
\end{center}

In this section, we extend Nesterov's technique to accelerate all higher-order gradient methods. The accelerated $p$-th order gradient method $\aG_p$ is obtained by coupling $\G_p$ with a mirror descent step weighted by a polynomial of order $p-1$. The accelerated algorithm $\aG_p$ has an improved convergence rate $O(1/\epsilon k^p)$ under the same $(p-1)$-st order smoothness assumption as $\G_p$. Thus, just like $\fG$, the family of accelerated gradient methods $\aG_p \in \faG$ still maintains the nice sequential property of the polynomially decreasing convergence rates.\footnote{While preparing this paper, we became aware of an unpublished manuscript by Baes~\cite{Baes09} who extended Nesterov's technique of estimate sequence and constructed higher-order variants of the accelerated methods, essentially identical to ours. We nevertheless present our generalization of Nesterov's proof in order to highlight the basic structure.}


Throughout this section, we fix an integer $p \ge 2$, and assume $f$ is $\frac{(p-1)!}{\epsilon}$-smooth of order $p-1$~\eqref{Eq:Smooth}. For the mirror descent step, we assume the distance generating function $h$ is $\sigma$-strongly convex of order $p$~\eqref{Eq:UC}, where $\sigma > 0$ is a constant (we can normalize $\sigma = 1$).
For example, we can take $h$ to be the $p$-th power of the norm:
\begin{align}\label{Eq:Dist}
d_p(x) = \frac{1}{p} \|x-x_0\|^p
\end{align}
(for arbitrary reference point $x_0 \in \X$), which is $(\frac{1}{2})^{p-2}$-uniformly convex of order $p$~\cite[Lemma~4]{Nesterov08}.


\subsection{Accelerated $p$-th order gradient method}
\label{Sec:HigherGradAlg}
The {\em accelerated $p$-th order gradient method} $\aG_p$ maintains three sequences $\{x_k\},\{y_k\},\{z_k\}$ as follows. Starting from any $x_0 \in \X$, $k \ge 0$ the algorithm proceeds:
\begin{subequations}\label{Eq:AGM}
\begin{align}
y_k &= \arg\min_y \left\{ f_{p-1}(y;x_k) + \frac{2}{\epsilon} \cdot \frac{1}{p} \|y-x_k\|^p \right\}   \label{Eq:AGMy}  \\
z_k &= \arg\min_z \left\{ \sum_{i=0}^k Cp \, i^{(p-1)} \big[f(y_i) + \langle \nabla f(y_i), z-y_i \rangle \big] + \frac{1}{\epsilon} \cdot \frac{1}{\sigma} D_h(z,x_0) \right\}   \label{Eq:AGMz}  \\
x_{k+1} &= \frac{p}{k+p} z_k + \frac{k}{k+p} y_k  \label{Eq:AGMx}
\end{align}
\end{subequations}
where $i^{(p-1)} := i(i+1) \cdots (i+p-2)$ denotes the {\em rising factorial}, and $C  \le (4p)^{-p}$ is a constant.

Note, the $y$-update~\eqref{Eq:AGMy} above is the $p$-th gradient method, but with slightly larger regularization ($\frac{c}{\epsilon}$ with any $c > 1$; above is $c=2$). As noted in Section~\ref{Sec:NesterovAcc}, the $z$-update~\eqref{Eq:AGMz} above is given in a ``dual averaging'' form, because (for example, when $\X = \R^d$) we can write it explicitly:
\begin{align}\label{Eq:AGMzUp}
\nabla h(z_k) \,=\, \nabla h(x_0) - \epsilon \sigma C p \sum_{i=0}^k i^{(p-1)} \nabla f(y_i)
\,=\, \nabla h(z_{k-1}) - \epsilon \sigma C p k^{(p-1)} \nabla f(y_k).
\end{align}
Therefore, the $z$-update~\eqref{Eq:AGMz} can also be equivalently written recursively as a mirror descent step:
\begin{align}\label{Eq:AGMz2}
z_k = \arg\min_z \left\{ Cpk^{(p-1)} \langle \nabla f(y_k),z \rangle  + \frac{1}{\epsilon \sigma} D_h(z,z_{k-1}) \right\}.
\end{align}
But it turns out that the expanded form of the update~\eqref{Eq:AGMz} is more convenient for us.

\subsection{Convergence analysis}

We can justify the performance of the accelerated algorithm~\eqref{Eq:AGM} by following a straightforward generalization of Nesterov's arguments~\cite{Nesterov05,Nesterov08}, which proceeds as follows.
We first recall the following property for the $p$-th order gradient step in the $y$-update~\eqref{Eq:AGMy}. This lemma generalizes~\cite[Lemma~6]{Nesterov08}, and its proof is provided in Appendix~\ref{App:LemHigherGrad}.
\begin{lemma}\label{Lem:HigherGrad}
If $f$ is $\frac{(p-1)!}{\epsilon}$-smooth of order $p-1$, then the $y$-update~\eqref{Eq:AGMy} has the guarantee:
\begin{align}\label{Eq:HigherGradLem}
\langle \nabla f(y_k), \, x_k-y_k \rangle \,\ge\, \frac{1}{4} \, \epsilon^{\frac{1}{p-1}} \|\nabla f(y_k)\|_*^{\frac{p}{p-1}}.
\end{align}
\end{lemma}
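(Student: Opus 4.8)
The plan is to extract the stated inequality from two facts about $y_k$: the first-order optimality condition of the $p$-th order gradient step~\eqref{Eq:AGMy}, and the $(p-1)$-st order smoothness of $f$, which bounds how far $\nabla f(y_k)$ lies from the gradient of the Taylor surrogate. Set $M = 2/\epsilon$, $u = y_k - x_k$, and $r = \|u\|$. Differentiating the objective in~\eqref{Eq:AGMy} in $y$ and setting the result to zero at the minimizer $y_k$ gives $\nabla_y f_{p-1}(y_k;x_k) = -M r^{p-2} u$ (the surrogate $f_{p-1}(\cdot\,;x_k)$ need not be convex, but its objective is coercive for $p\ge2$, so $y_k$ exists and is a critical point, which is all we use). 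Next, the gradient analogue of~\eqref{Eq:TayErr}, namely $\|\nabla f(x)-\nabla_x f_{p-1}(x;y)\|_*\le\frac{L}{(p-1)!}\|x-y\|^{p-1}$, applied with $L=(p-1)!/\epsilon$ (via~\eqref{Eq:Smooth}), gives $\|\rho\|_* \le \tfrac1\epsilon r^{p-1} = \tfrac M2 r^{p-1}$, where $\rho := \nabla f(y_k) - \nabla_y f_{p-1}(y_k;x_k)$; thus $\nabla f(y_k) = -M r^{p-2} u + \rho$. If $r=0$ the optimality condition forces $\nabla f(x_k)=0$, hence $\nabla f(y_k)=\nabla f(x_k)=0$ and~\eqref{Eq:HigherGradLem} holds trivially, so assume $r>0$ henceforth.

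The crucial step---the one place a naive estimate is too weak---is to relate $\langle\nabla f(y_k),\,x_k-y_k\rangle$ to $\|\nabla f(y_k)\|_*$ without splitting the inner product and bounding the $\rho$-part by Cauchy--Schwarz, which would only give the weaker constant $3^{-p/(p-1)}$ (e.g.\ $\tfrac19$ at $p=2$). Instead I would expand both $\langle\nabla f(y_k),\,x_k-y_k\rangle = M r^p - \langle\rho,u\rangle$ and $\|\nabla f(y_k)\|_*^2 = M^2 r^{2p-2} - 2M r^{p-2}\langle\rho,u\rangle + \|\rho\|_*^2$, eliminate $\langle\rho,u\rangle$ between them, and discard the nonpositive term $\|\rho\|_*^2 - M^2 r^{2p-2} \le \big(\tfrac1{\epsilon^2} - \tfrac4{\epsilon^2}\big) r^{2p-2} \le 0$. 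This yields the clean relation $\|\nabla f(y_k)\|_*^2 \le 2M r^{p-2}\,\langle\nabla f(y_k),\,x_k-y_k\rangle$, which in particular certifies that the inner product is nonnegative. Separately, the reverse triangle inequality gives $\|\nabla f(y_k)\|_* \ge M r^{p-1} - \|\rho\|_* \ge \tfrac M2 r^{p-1}$, hence $r^{p-2} \le \big(\tfrac2M\|\nabla f(y_k)\|_*\big)^{(p-2)/(p-1)}$.

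Substituting this bound on $r^{p-2}$ into $\|\nabla f(y_k)\|_*^2 \le 2M r^{p-2}\langle\nabla f(y_k),\,x_k-y_k\rangle$ and solving for the inner product, the powers of $\|\nabla f(y_k)\|_*$ combine to exponent $2 - \tfrac{p-2}{p-1} = \tfrac p{p-1}$, the surviving power of $M$ is $M^{-1/(p-1)} = (\epsilon/2)^{1/(p-1)}$, and the leftover powers of $2$ collapse as $2^{-1/(p-1)}\cdot 2^{-(2p-3)/(p-1)} = 2^{-(2p-2)/(p-1)} = \tfrac14$, giving exactly $\langle\nabla f(y_k),\,x_k-y_k\rangle \ge \tfrac14\,\epsilon^{1/(p-1)}\|\nabla f(y_k)\|_*^{p/(p-1)}$. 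I expect the main obstacle to be precisely the choice flagged in the second paragraph: one must argue through the squared-norm identity rather than a triangle-inequality or Cauchy--Schwarz split of the inner product, since the latter loses a constant factor and does not reach $\tfrac14$ for small $p$; the rest ($r=0$, coercivity and nonconvexity of the surrogate, and tracking the exponents) is routine bookkeeping.
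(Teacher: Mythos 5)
Your proof is correct and shares the same spine as the paper's: both start from the first-order optimality condition of the $y$-update, both invoke the $(p-1)$-st order smoothness to bound $\|\nabla f(y_k)-\nabla f_{p-1}(y_k;x_k)\|_*\le \tfrac1\epsilon r^{p-1}$ (the gradient analogue of~\eqref{Eq:TayErr}), and both square the residual to expose $\langle\nabla f(y_k),x_k-y_k\rangle$. The difference is confined to the final elimination of $r=\|y_k-x_k\|$. The paper keeps both terms in the lower bound, $\langle\nabla f(y_k),x_k-y_k\rangle\ge\frac{\epsilon}{4r^{p-2}}\|\nabla f(y_k)\|_*^2+\frac{3r^p}{4\epsilon}$, and minimizes the right-hand side over $r>0$, which forces a $p=2$ versus $p\ge3$ case split (the argmin formula degenerates at $p=2$) and yields the constant $\frac14\bigl((\tfrac{3p}{p-2})^{(p-2)/(2p-2)}+(\tfrac{p-2}{3p})^{p/(2p-2)}\bigr)\ge\frac14$. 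You instead discard the $\frac{3r^p}{4\epsilon}$ term --- i.e.\ you only use nonnegativity of $M^2r^{2p-2}-\|\rho\|_*^2$ --- and compensate with the separate control $r^{p-1}\le\epsilon\|\nabla f(y_k)\|_*$ from the reverse triangle inequality, which you then substitute in. Your route lands on exactly $\frac14$, avoids the case split, and also makes the nonnegativity of the inner product transparent before dividing; the paper's route is marginally sharper for large $p$ but rounds down to $\frac14$ anyway. Both are valid, and your explicit handling of the degenerate case $r=0$ (which forces $\nabla f(y_k)=0$) is a small but genuine completeness improvement over the paper, which leaves it implicit.
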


\smallskip
With Lemma~\ref{Lem:HigherGrad} in hand, we can proceed as follows. Let $\psi_k$ (Nesterov's ``estimate function'') denote the objective function in the $z$-update~\eqref{Eq:AGMz}:
\begin{align}\label{Eq:EstFunc}
\psi_k(x) = Cp \sum_{i=0}^k i^{(p-1)} \big[f(y_i) + \langle \nabla f(y_i), x-y_i \rangle \big] + \frac{1}{\epsilon \sigma} D_h(x,x_0).
\end{align}
Since $f$ is convex, each term in the summation above is at most $Cp\, i^{(p-1)} f(x)$. Noting that $\sum_{i=0}^k  i^{(p-1)} = k^{(p)}/p$, this yields the following upper bound on the estimate function, for all $x \in \X$:
\begin{align}\label{Eq:EstFunc2}
\psi_k(x)
\,\le\,
Ck^{(p)} f(x) + \frac{1}{\epsilon \sigma} D_h(x,x_0).
\end{align}
Furthermore, the updates in~\eqref{Eq:AGM} are constructed in such a way that we also have the following guarantee.
\begin{proposition}\label{Prop:AGM1}
If $C \le (4p)^{-p}$, then for all $k \ge 0$:
\begin{align}\label{Eq:AGMProp1}
Ck^{(p)} \, f(y_k) \,\le\, \psi_k^* \,:=\, \min_x \psi_k(x).
\end{align}
\end{proposition}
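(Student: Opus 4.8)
The plan is to prove Proposition~\ref{Prop:AGM1} by induction on $k$, following Nesterov's estimate-sequence argument. The base case $k=0$: since $0^{(p)} = 0(1)\cdots(p-1) = 0$, the claim $C \cdot 0 \cdot f(y_0) \le \psi_0^*$ reduces to $0 \le \psi_0^* = \min_x \psi_0(x)$, which holds because $\psi_0(x) = Cp\,0^{(p-1)}[f(y_0)+\langle\nabla f(y_0),x-y_0\rangle] + \frac{1}{\epsilon\sigma}D_h(x,x_0) \ge 0$ (here $0^{(p-1)} = 0(1)\cdots(p-2) = 0$ for $p\ge 3$, and for $p=2$ the linear term is handled by the strong convexity of $D_h$ dominating it; in all cases $\psi_0^*\ge 0$ works out since $D_h\ge 0$ and the argmin can only be taken where the objective is nonnegative if we are careful — more simply, $\psi_0^* \ge \psi_0(z_0) \ge 0$ when $0^{(p-1)}=0$).

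For the inductive step, assume $Ck^{(p)}f(y_k) \le \psi_k^*$. I would expand $\psi_{k+1}(x) = \psi_k(x) + Cp(k+1)^{(p-1)}[f(y_{k+1}) + \langle\nabla f(y_{k+1}), x - y_{k+1}\rangle]$. Since $\psi_k$ is $\frac{1}{\epsilon\sigma}$-times a $\sigma$-uniformly-convex-of-order-$p$ function plus a linear term, $\psi_k$ is itself $\frac{1}{\epsilon}$-uniformly convex of order $p$, so $\psi_k(x) \ge \psi_k^* + \frac{1}{\epsilon p}\|x - z_k\|^p$ where $z_k = \arg\min\psi_k$. Plugging this lower bound in, minimizing over $x$, and using the induction hypothesis gives
\begin{align*}
\psi_{k+1}^* \ge Ck^{(p)}f(y_k) + Cp(k+1)^{(p-1)}f(y_{k+1}) + \min_x\left\{ Cp(k+1)^{(p-1)}\langle\nabla f(y_{k+1}), x-z_k\rangle + \frac{1}{\epsilon p}\|x-z_k\|^p\right\}.
\end{align*}
The inner minimization over $x$ evaluates (by the same computation as~\eqref{Eq:RescGradFlowOpt}, i.e. minimizing a linear term plus $\frac{1}{\epsilon p}\|\cdot\|^p$) to $-\frac{p-1}{p}(\epsilon)^{\frac{1}{p-1}}(Cp(k+1)^{(p-1)})^{\frac{p}{p-1}}\|\nabla f(y_{k+1})\|_*^{\frac{p}{p-1}}$ up to constants. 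The goal is then to show the right-hand side is at least $C(k+1)^{(p)}f(y_{k+1})$.

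To close the induction I would rewrite $C(k+1)^{(p)}f(y_{k+1}) = Ck^{(p)}f(y_{k+1}) + Cp(k+1)^{(p-1)}f(y_{k+1})$ using the Pascal-type identity $(k+1)^{(p)} - k^{(p)} = p(k+1)^{(p-1)}$ (which should be verified), so it suffices to show
\begin{align*}
Ck^{(p)}\big(f(y_k) - f(y_{k+1})\big) \ge \text{(the negative error term)},
\end{align*}
i.e. $Ck^{(p)}(f(y_k)-f(y_{k+1})) \ge c_p \epsilon^{\frac{1}{p-1}}(C(k+1)^{(p-1)})^{\frac{p}{p-1}}\|\nabla f(y_{k+1})\|_*^{\frac{p}{p-1}}$ for an explicit constant $c_p$. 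Here is where the pieces fit together: from the definition~\eqref{Eq:AGMx} of $x_{k+1}$ we have $x_{k+1} - y_k = \frac{p}{k+p}(z_k - y_k)$, and I would combine convexity $f(y_k) - f(y_{k+1}) \ge \langle\nabla f(y_{k+1}), y_k - y_{k+1}\rangle$, decompose $y_k - y_{k+1} = (y_k - x_{k+1}) + (x_{k+1} - y_{k+1})$, and apply Lemma~\ref{Lem:HigherGrad} to $(x_{k+1} - y_{k+1})$, namely $\langle\nabla f(y_{k+1}), x_{k+1}-y_{k+1}\rangle \ge \frac14\epsilon^{\frac{1}{p-1}}\|\nabla f(y_{k+1})\|_*^{\frac{p}{p-1}}$. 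The term $\langle\nabla f(y_{k+1}), y_k - x_{k+1}\rangle = -\frac{p}{k+p}\langle\nabla f(y_{k+1}), z_k - y_k\rangle$ — controlling its sign/magnitude is the subtle point, and Nesterov's trick is that the $z$-update~\eqref{Eq:AGMz2} makes $z_k$ close enough to $z_{k-1}$ that this mixing term is either nonnegative or a lower-order perturbation. The main obstacle will be exactly this bookkeeping: choosing the constant $C \le (4p)^{-p}$ so that all the constants from Lemma~\ref{Lem:HigherGrad}, the rising-factorial asymptotics $k^{(p)} \sim k^p$, $(k+1)^{(p-1)}\sim k^{p-1}$, and the $\frac{p}{p-1}$-power from the inner minimization balance; the factor $(4p)^{-p}$ is precisely what makes $(Cp)^{\frac{1}{p-1}}k^{(p-1)/(p-1)}$ small relative to $\frac14 k^{(p)}/(k+1)^{(p-1)} \sim \frac{k}{4}$. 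I expect the argument to go through mutatis mutandis from~\cite[Lemma~7 / Theorem~2]{Nesterov08}, with the only real care needed in tracking the rising-factorial identities and the constant $C$.
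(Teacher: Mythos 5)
Your proposal has the right skeleton --- induction, the $\frac{1}{\epsilon}$-uniform convexity of $\psi_k$, the inner minimization via Fenchel--Young, Lemma~\ref{Lem:HigherGrad} for the $y$-step, and the constant $C \le (4p)^{-p}$ to make everything balance. But there is a genuine gap in how you handle the linear term, and your speculative fix for it is wrong.

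When you peel off the inner minimization, you split $\langle \nabla f(y_{k+1}), x - y_{k+1}\rangle = \langle \nabla f(y_{k+1}), x - z_k\rangle + \langle \nabla f(y_{k+1}), z_k - y_{k+1}\rangle$, and the second summand is a constant (in $x$) that must ride along. Your displayed bound on $\psi_{k+1}^*$ silently drops the term $Cp(k+1)^{(p-1)}\langle \nabla f(y_{k+1}), z_k - y_{k+1}\rangle$. That omission is precisely why, a few lines later, you find yourself trying to bound $Ck^{(p)}\langle \nabla f(y_{k+1}), y_k - y_{k+1}\rangle$ \emph{alone}, which forces the decomposition $y_k - y_{k+1} = (y_k - x_{k+1}) + (x_{k+1} - y_{k+1})$ and leaves you stuck on the ``mixing term'' $\langle \nabla f(y_{k+1}), y_k - x_{k+1}\rangle = -\frac{p}{k+p}\langle \nabla f(y_{k+1}), z_k - y_k\rangle$. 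There is no good way to control that term on its own, and the mechanism you guess at (that the $z$-update keeps $z_k$ close to $z_{k-1}$) is not what happens and plays no role.

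What actually happens is an exact algebraic cancellation that uses the $x$-update~\eqref{Eq:AGMx} and the two rising-factorial identities $(k+1)^{(p)}\frac{p}{k+p} = p(k+1)^{(p-1)}$ and $(k+1)^{(p)}\frac{k}{k+p} = k^{(p)}$. Keeping the dropped constant, the two linear pieces combine as
\begin{align*}
Ck^{(p)}\big(y_k - y_{k+1}\big) + Cp(k+1)^{(p-1)}\big(z_k - y_{k+1}\big)
= C(k+1)^{(p)}\Big(\tfrac{k}{k+p}y_k + \tfrac{p}{k+p}z_k - y_{k+1}\Big)
= C(k+1)^{(p)}\big(x_{k+1} - y_{k+1}\big),
\end{align*}
so after pairing with $\nabla f(y_{k+1})$ there is no residual nuisance term at all: you are left with $C(k+1)^{(p)}\langle \nabla f(y_{k+1}), x_{k+1}-y_{k+1}\rangle$, which Lemma~\ref{Lem:HigherGrad} bounds below by $\frac14 C(k+1)^{(p)}\epsilon^{1/(p-1)}\|\nabla f(y_{k+1})\|_*^{p/(p-1)}$. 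Combined with the Fenchel--Young bound on $\min_x\{Cp(k+1)^{(p-1)}\langle \nabla f(y_{k+1}),x-z_k\rangle + \frac{1}{\epsilon p}\|x-z_k\|^p\}$, the bracket is nonnegative once $C \le (4p)^{-p}$ (using $\{(k+1)^{(p-1)}\}^{p/(p-1)} \le (k+1)^{(p)}$), closing the induction. In short: the cross term you were trying to bound does not need bounding --- it is precisely what the convex-combination choice of $x_{k+1}$ was engineered to annihilate, and recovering that cancellation requires carrying the constant you dropped when you introduced $z_k$ into the inner minimization.
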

\begin{proof}
We proceed via induction on $k \ge 0$. The base case $k = 0$ is trivial since both sides equal $0$. Now assume~\eqref{Eq:AGMProp1} holds for some $k \ge 0$; we will show it also holds for $k+1$.

Since $h$ is $\sigma$-uniformly convex of order $p$, the rescaled Bregman divergence $\frac{1}{\epsilon \sigma} D_h(x,x_0)$ is $\frac{1}{\epsilon}$-uniformly convex. Thus, the estimate function $\psi_k$~\eqref{Eq:EstFunc} is also $\frac{1}{\epsilon}$-uniformly convex of order $p$.
Since $z_k$ is the minimizer of $\psi_k$, this implies $\psi_k(x) \,\ge\, \psi_k^* + \frac{1}{\epsilon p} \|x-z_k\|^p$.
We then apply the inductive hypothesis~\eqref{Eq:AGMProp1} and use the convexity of $f$~\eqref{Eq:Conv} to obtain the bound, for all $x \in \X$:
\begin{align}\label{Eq:AGMProp2}
\psi_k(x) \,\ge\, Ck^{(p)} \big[ f(y_{k+1}) + \langle \nabla f(y_{k+1}), y_k-y_{k+1} \rangle \big] + \frac{1}{\epsilon p} \|x-z_k\|^p.
\end{align}
Then by adding the $(k+1)$-st term in the definition of $\psi_{k+1}$~\eqref{Eq:EstFunc} to both sides of~\eqref{Eq:AGMProp2}, we obtain:
\begin{align}\label{Eq:AGMProp3}
\psi_{k+1}(x)
\,\ge\,
C(k+1)^{(p)} \big[ f(y_{k+1}) + \big\langle \nabla f(y_{k+1}),\, x_{k+1}-y_{k+1} + \tau_k (x-z_k) \big\rangle \big] + \frac{1}{\epsilon p} \|x-z_k\|^p
\end{align}
where $\tau_k = \frac{p (k+1)^{(p-1)}}{(k+1)^{(p)}} = \frac{p}{k+p}$, and in the above we have also used the definition of $x_{k+1}$ as a convex combination of $y_k$ and $z_k$ with weight $\tau_k$~\eqref{Eq:AGMx}.

Note, the first term in~\eqref{Eq:AGMProp3} above gives our desired inequality~\eqref{Eq:AGMProp1} for $k+1$. So to finish the proof, we have to prove the remaining terms in~\eqref{Eq:AGMProp3} are nonnegative.
We do so by applying two inequalities: We apply Lemma~\ref{Lem:HigherGrad} to the term $\langle \nabla f(y_{k+1}), x_{k+1}-y_{k+1} \rangle$. We also apply the classical Fenchel-Young inequality (e.g.,~\cite[Lemma~2]{Nesterov08}): $\langle s,h \rangle + \frac{1}{p} \|h\|^p \ge - \frac{p}{p-1} \|s\|^{\frac{p}{p-1}}_*$ with the choices $h = \epsilon^{-\frac{1}{p}} (x-z_k)$ and $s =  \epsilon^{\frac{1}{p}} Cp  (k+1)^{(p-1)} \nabla f(y_{k+1})$. Then from~\eqref{Eq:AGMProp3}, we obtain:
\begin{align}
\psi_{k+1}(x)
\,\ge\,
C(k+1)^{(p)}  \left[ f(y_{k+1})
+ \left( \frac{1}{4} - \frac{p^{\frac{2p-1}{p-1}}}{p-1} \, C^{\frac{1}{p-1}} \, \frac{\{(k+1)^{(p-1)}\}^{\frac{p}{p-1}}}{(k+1)^{(p)}} \right) \right] \, \epsilon^{\frac{1}{p-1}} \, \|\nabla f(y_{k+1})\|_*^{\frac{p}{p-1}}. \notag
\end{align}
Notice that $\{(k+1)^{(p-1)}\}^{\frac{p}{p-1}} \le (k+1)^{(p)}$. Then from the assumption $C \le (4p)^{-p}$, we see that the second term inside the parenthesis above is nonnegative. Hence we conclude the desired inequality $\psi_{k+1}(x) \ge C (k+1)^{(p)} f(y_{k+1})$, finishing the induction. 
\end{proof}

\medskip 
Finally, we can combine the result of Proposition~\ref{Prop:AGM1} with the basic estimate~\eqref{Eq:EstFunc2} at $x = x^*$, to conclude a convergence rate on $\aG_p$, which we summarize in the following theorem. 

\begin{theorem}\label{Thm:AGM}
If $f$ is $\frac{(p-1)!}{\epsilon}$-smooth of order $p-1$, $h$ is $\sigma$-uniformly convex of order $p$, and $C \le (4p)^{-p}$, then the $p$-th order accelerated gradient algorithm~\eqref{Eq:AGM} has convergence rate:
\begin{align}\label{Eq:AGMRate}
f(y_k) - f^* \,\le\, \frac{D_h(x^*,x_0)}{C \epsilon \sigma k^{(p)}} \,=\, O\left(\frac{1}{\epsilon k^p}\right).
\end{align}
\end{theorem}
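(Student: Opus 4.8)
The plan is to combine the two ingredients that the text has already laid out: the upper bound \eqref{Eq:EstFunc2} on Nesterov's estimate function and the lower bound \eqref{Eq:AGMProp1} from Proposition~\ref{Prop:AGM1}. First I would evaluate the estimate function $\psi_k$ at the optimal point $x = x^*$. On one hand, Proposition~\ref{Prop:AGM1} gives $Ck^{(p)}f(y_k) \le \psi_k^* = \min_x \psi_k(x) \le \psi_k(x^*)$. On the other hand, \eqref{Eq:EstFunc2} at $x = x^*$ gives $\psi_k(x^*) \le Ck^{(p)} f(x^*) + \frac{1}{\epsilon\sigma} D_h(x^*, x_0) = Ck^{(p)} f^* + \frac{1}{\epsilon\sigma} D_h(x^*, x_0)$. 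Chaining these two inequalities yields $Ck^{(p)} f(y_k) \le Ck^{(p)} f^* + \frac{1}{\epsilon\sigma} D_h(x^*,x_0)$.

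The second step is purely algebraic: subtract $Ck^{(p)} f^*$ from both sides and divide by $Ck^{(p)}$ (which is positive for $k \ge 1$, since $k^{(p)} = k(k+1)\cdots(k+p-1)$ is a product of positive integers), obtaining exactly
\begin{align*}
f(y_k) - f^* \le \frac{D_h(x^*, x_0)}{C\epsilon\sigma\, k^{(p)}}.
\end{align*}
Finally, to get the asymptotic form, I would observe that $k^{(p)} = k(k+1)\cdots(k+p-1) \ge k^p$, so the bound is $O(1/(\epsilon k^p))$ as $k \to \infty$, with the constant $D_h(x^*,x_0)/(C\sigma)$ absorbed into the $O(\cdot)$ notation. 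This matches the claimed rate \eqref{Eq:AGMRate}.

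There is essentially no obstacle here, since all the hard work — establishing Lemma~\ref{Lem:HigherGrad}, the convexity-based upper bound \eqref{Eq:EstFunc2}, and especially the inductive Proposition~\ref{Prop:AGM1} with its delicate constant bookkeeping ($C \le (4p)^{-p}$, the Fenchel--Young application, and the inequality $\{(k+1)^{(p-1)}\}^{p/(p-1)} \le (k+1)^{(p)}$) — has already been done. The only point requiring a moment's care is the direction of the inequalities: one must check that $\psi_k^* \le \psi_k(x^*)$ (true since $\psi_k^*$ is the minimum) combines correctly with the \emph{upper} bound on $\psi_k(x^*)$, so that the two bounds point the right way to sandwich $Ck^{(p)} f(y_k)$ between $Ck^{(p)} f^*$ and $Ck^{(p)} f^* + \frac{1}{\epsilon\sigma}D_h(x^*,x_0)$. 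If desired, one could also remark that the same estimate-sequence argument bounds $f(y_j) - f^*$ for every $j \le k$, but the stated theorem only asserts the bound for the current iterate, so no extra work is needed.
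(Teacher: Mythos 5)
Your proof is correct and follows exactly the route the paper sketches: chain Proposition~\ref{Prop:AGM1} with the bound~\eqref{Eq:EstFunc2} evaluated at $x = x^*$, then rearrange. Nothing to add.
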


The result above shows that by plugging in the $p$-th gradient method $\G_p$ into the accelerated algorithm~\eqref{Eq:AGM}, we boost its convergence rate from $O(1/\epsilon k^{p-1})$ to $O(1/\epsilon k^p)$. In particular, the family of accelerated gradient algorithms $\faG$ still maintains the nice sequential pattern of polynomial convergence rates, just like $\fG$.

\subsection{Continuous time limit}
\label{Sec:DiscToCts}

We show that the continuous time limit ($\epsilon \to 0$) of the accelerated algorithm $\aG_p$~\eqref{Eq:AGM} is a second order differential equation (with time scaling $\delta = \epsilon^p$). 
This is in contrast to the original $p$-th order gradient method $\G_p$~\eqref{Eq:HigherGrad}, which corresponds to the first-order rescaled gradient flow in continuous time (with time scaling $\delta = \epsilon^{p-1}$). Here we sketch the transition from discrete to continuous time, and in Section~\ref{Sec:NestFlowDisc} we will see the other view starting from the continuous time perspective.

We first note the following difference between the continuous time considerations of $\G_p$ and $\aG_p$. The $p$-th gradient method $\G_p$ is an algorithm that updates the sequence $x_k$ by:
\begin{align}\label{Eq:GradAlg}
x_{k+1} = \A_\epsilon(x_k)
\end{align}
where $\A_\epsilon \colon \X \to \X$ is the operator that returns the minimizer of the optimization problem~\eqref{Eq:HigherGrad}. The update~\eqref{Eq:GradAlg} above is equivalent to modeling the (discrete time) velocity $v_k = x_{k+1} - x_k$ as a function of the current position, $v_k = \A_\epsilon'(x_k)$ $(= \A_\epsilon(x_k) - x_k)$. As $\epsilon \to 0$, and by identifying $x_{k+1} = X_{t+\delta}$, $x_k = X_t$ with $\delta = \epsilon^{p-1}$, we recover a first order differential equation, the rescaled gradient flow~\eqref{Eq:RescGradFlow}. 

On the other hand, the accelerated gradient algorithm $\aG_p$ maintains three sequences:
\begin{align}\label{Eq:AccGradAlg}
(x_{k+1},y_{k+1},z_{k+1}) \,=\, \A_{k,\epsilon}(x_k,y_k,z_k)
\end{align}
where the operator $\A_{k,\epsilon}$ now changes over (discrete) time $k$. As in the preceding paragraph, as $\epsilon \to 0$ the update~\eqref{Eq:AccGradAlg} above gives rise to a system of first order differential equations in the variables $X_t,Z_t$ (and $Y_t = X_t$), which is equivalent to a second order equation in $X_t$. Moreover, since $\A_{k,\epsilon}$ depends with $k$, this second order equation has time-varying coefficients.

\paragraph{Derivation.}
We now analyze the updates~\eqref{Eq:AGM} in the limit $\epsilon \to 0$, starting with the $z$-update~\eqref{Eq:AGMz}.
\begin{itemize}
  \item {\bf $z$-update.} As noted in Section~\ref{Sec:HigherGradAlg}, we can write the $z$-update recursively as:
\begin{align}\label{Eq:AGMzw}
w_k \,=\, w_{k-1} - \epsilon Cp k^{(p-1)} \nabla f(y_k)
\end{align}
where $w_k = \nabla h(z_k)$, and here we set $\sigma$ (the uniform convexity constant of $h$) to be $1$ for simplicity.
Now we invoke the hypothesis that the sequences $y_k,w_k$ are discrete time snapshots of continuous time curves $Y_t,W_t$ (similarly for $x_k,z_k$ with respect to $X_t,Z_t$) at each time increment $\delta > 0$.
Specifically, we identify $y_k = Y_t$, $w_k = W_t$, and $w_{k-1} = W_{t-\delta}$, and use the linear approximation $W_{t-\delta} = W_t - \delta \dot W_t + o(\delta)$.
Under these identifications,~\eqref{Eq:AGMzw} becomes:
\begin{align}\label{Eq:AGMzw2}
\dot W_t = - \frac{\epsilon}{\delta} \, Cpk^{(p-1)} \nabla f(Y_t) + o(1).
\end{align}
With time increment $\delta$ we have the correspondence $t = \delta k$ or $k = t/\delta$, so $k^{(p-1)} \approx k^{p-1} = t^{p-1}/\delta^{p-1}$. Thus, on the right hand side of~\eqref{Eq:AGMzw2} we have the factor $\epsilon / \delta^p$. As $\epsilon \to 0$ and $\delta \to 0$, for the expression~\eqref{Eq:AGMzw2} above to have a meaningful content, we need to have the two variables to scale as $\epsilon = \delta^p$ (or $\epsilon = \Theta(\delta^p)$ in general). Under this scaling,~\eqref{Eq:AGMzw2} yields the differential equation for $W_t$:
\begin{align}\label{Eq:AGMzw3}
\dot W_t = - Cpt^{p-1} \nabla f(Y_t).
\end{align}
Since $w_k = \nabla h(z_k)$, $W_t = \nabla h(Z_t)$, we also have $\frac{d}{dt} \nabla h(Z_t) = -Ct^{p-1} \nabla f(Y_t)$, or equivalently:
\begin{align}\label{Eq:AGMzdot}
\dot Z_t = - Cpt^{p-1} \nabla^2 h(Z_t)^{-1}\, \nabla f(Y_t).
\end{align}
Notice,~\eqref{Eq:AGMzdot} is a first order equation in time since it involves the velocity $\dot Z_t$, but second order in space since it involves the Hessian $\nabla^2 h(Z_t)$.

  \item {\bf $y$-update.} Observe, the $y$-update~\eqref{Eq:AGMy} operates on a smaller time scale $\epsilon^{\frac{1}{p-1}}$. That is, from the first order optimality condition of $y_k$~\eqref{Eq:HigherGradLem1}, and the bound implied by the $\frac{(p-1)!}{\epsilon}$-Lipschitz assumption of $\nabla^{p-1} f$~\eqref{Eq:HigherGradLem2}, we have:\footnote{Lemma~\ref{Lem:HigherGrad} gives the reverse $\|x_k-y_k\| \ge \frac{1}{4} \epsilon^{\frac{1}{p-1}} \|\nabla f(y_k)\|_*^{\frac{1}{p-1}}$, so the bound is tight. Indeed, like in Section~\ref{Sec:EpsDelta}, the $y$-update~\eqref{Eq:AGMy} is a discretized rescaled gradient flow: $y_k = x_k - 2^{-\frac{1}{p-1}}  \epsilon^{\frac{1}{p-1}} \nabla f(x_k) / \|\nabla f(x_k)\|_*^{\frac{p-2}{p-1}} + o(\epsilon^{\frac{1}{p-1}})$.}
\begin{align}\label{Eq:xySmall}
\|x_k-y_k\| \,\le\, \epsilon^{\frac{1}{p-1}} \|\nabla f(y_k)\|_*^{\frac{1}{p-1}}.
\end{align}
With the identification $x_k = X_t$, $y_k = Y_t$, the bound~\eqref{Eq:xySmall} above shows the difference $X_t - Y_t = O(\epsilon^{\frac{1}{p-1}})$ is smaller than our time step $\delta = \epsilon^{\frac{1}{p}}$ (needed for~\eqref{Eq:AGMzdot}). Therefore:
\begin{align}\label{Eq:AGMXY}
X_t = Y_t.
\end{align}

  \item {\bf $x$-update.} The $x$-update~\eqref{Eq:AGMx} can be written as $x_{k+1}-x_k = \frac{p}{k+p}(z_k-x_k) + \frac{k}{k+p} (y_k-x_k)$. Identifying $x_{k+1} - x_k = \delta \dot X_t$, $z_k = Z_t$, and $\delta(k+p) = t$, as well as using the bound~\eqref{Eq:xySmall} with $\delta = \epsilon^{\frac{1}{p}}$, we get $\|\dot X_t - \frac{p}{t}(Z_t-X_t)\| \le \frac{1}{\delta} \cdot \epsilon^{\frac{1}{p-1}} \|\nabla f(Y_t)\|_*^{\frac{1}{p-1}} \to 0$, which means:
\begin{align}\label{Eq:AGMxdot}
\dot X_t = \frac{p}{t} (Z_t-X_t).
\end{align}
\end{itemize}


Thus, we conclude that the continuous time limit of the accelerated gradient method $\aG_p$~\eqref{Eq:AGM} is the system of first order differential equations~\eqref{Eq:AGMzdot},~\eqref{Eq:AGMXY},~\eqref{Eq:AGMxdot}. This system is equivalent to the following second order differential equation for $X_t$:
\begin{align}\label{Eq:AGMCont0}
\ddot X_t \,+\, \frac{p+1}{t} \dot X_t \,+\, Cp^2t^{p-2} \nabla^2 h\left(X_t + \frac{t}{p} \dot X_t\right)^{-1} \nabla f(X_t) = 0.
\end{align}
Note, even though the accelerated gradient methods $\aG_p$ use higher-order derivatives of $f$, in continuous time they are all second order differential equations~\eqref{Eq:AGMCont0}. This is in parallel to---but also in contrast from---how the $p$-th gradient method $\G_p$ is a $(p-1)$-st order algorithm (in space) but corresponds to a first order differential equation (in time), the rescaled gradient flow~\eqref{Eq:RescGradFlow}.

\section{Nesterov flow}
\label{Sec:NestFlow}

In this section we study the family of second order differential equation~\eqref{Eq:AGMCont0}, the {\em Nesterov flow}:
\begin{align}\label{Eq:NestFlow}
\ddot X_t + \frac{p+1}{t} \dot X_t + Cp^2t^{p-2} \nabla^2 h\left(X_t+\frac{t}{p}\dot X_t\right)^{-1} \nabla f(X_t) = 0
\end{align}
where $p > 0$ is not necessarily an integer, and $C > 0$ is a constant. Here we assume $f$ is convex and continuously differentiable, and $h$ is strictly convex so $\nabla^2 h$ is invertible. But we make no smoothness assumption on $f$ nor uniform convexity assumption on $h$.

Notice, we can equivalently write Nesterov flow~\eqref{Eq:NestFlow} as follows: 
\begin{align}\label{Eq:NestFlow2}
\frac{d}{dt} \nabla h\left(X_t+\frac{t}{p} \dot X_t \right) \;=\; \nabla^2 h\left(X_t+\frac{t}{p} \dot X_t \right) \left( \frac{p+1}{p} \dot X_t + \frac{t}{p} \ddot X_t \right) \;=\; -Cpt^{p-1} \nabla f(X_t).
\end{align}
As shown in Section~\ref{Sec:DiscToCts}, equation~\eqref{Eq:NestFlow2} is the continuous time limit of the $p$-th accelerated gradient method $\aG_p$~\eqref{Eq:AGM}, for integer $p \ge 2$.
The rewriting~\eqref{Eq:NestFlow2} is nicer than~\eqref{Eq:NestFlow} because it avoids the potential singularity problem at $t=0$ (i.e., no term $\frac{p+1}{t}$). 
Indeed, by setting $t \to 0$ in~\eqref{Eq:NestFlow2} we see that if $p > 1$, then $\dot X_0 = 0$. That is, any Nesterov curve $X_t$ that evolves following~\eqref{Eq:NestFlow},~\eqref{Eq:NestFlow2} must start from being at rest; it is the acceleration $\ddot X_t$ that drives the trajectory. 


\subsection{Convergence rate via energy functional}
\label{Sec:NestFlowRate}

Our interest in Nesterov flow~\eqref{Eq:NestFlow} stems from it being the continuous time limit of the $p$-th accelerated gradient method $\aG_p$, which has convergence rate $O(1/\epsilon k^p)$. 
(Theorem~\ref{Thm:AGM}).
We now show Nesterov flow preserves this convergence rate, without any additional assumptions on $f,h$ beyond convexity.
Define the {\em energy functional}:
\begin{align}\label{Eq:NestFlowE}
\E_t = C t^p (f(X_t)-f^*) + D_h\left(x^*, X_t+\frac{t}{p} \dot X_t \right)
\end{align}
where recall, $x^* = \arg\min_x f(x)$ and $f^* = f(x^*)$. 
It has time derivative:
\begin{align}\label{Eq:NestFlowEdot}
\dot \E_t
\,=\, Cp t^{p-1}\left(f(X_t) - f^* + \frac{t}{p} \langle \nabla f(X_t), \dot X_t \rangle \right) - \left\langle \frac{d}{dt} \nabla h\left(X_t+\frac{t}{p} \dot X_t \right), \, x^* - X_t - \frac{t}{p} \dot X_t \right \rangle.
\end{align}
If $X_t$ is governed by Nesterov flow~\eqref{Eq:NestFlow}, then $\dot \E_t$ above simplifies to:
\begin{align}\label{Eq:NestFlowEdot2}
\dot \E_t
\,=\, Cp t^{p-1} \big( f(X_t) - f^* + \langle \nabla f(X_t), x^*-X_t \rangle \big)
\stackrel{\eqref{Eq:Conv}}{\le} 0
\end{align}
where the last inequality follows from the convexity of $f$.
This means energy is decreasing over time: $\E_t \le \E_0 = D_h(x^*,X_0)$, for all $t \ge 0$. Since  $D_h(x^*,X_t+\frac{t}{p}\dot X_t) \ge 0$, we conclude that a curve $X_t$ governed by Nesterov flow~\eqref{Eq:NestFlow} has convergence guarantee:
\begin{align}\label{Eq:NestFlowRate}
f(X_t) - f^* \,\le\, \frac{D_h(x^*,X_0)}{Ct^p} \,=\, O\left(\frac{1}{t^p}\right)
\end{align}
which matches the $O(1/\epsilon k^p)$ convergence rate of $\aG_p$~\eqref{Eq:AGMRate} in discrete time, as claimed. But the bound~\eqref{Eq:NestFlowRate} holds for all $p > 0$, and only requires convexity of $f$ (in~\eqref{Eq:NestFlowEdot2}) and $h$ (so that $D_h \ge 0$).

We note,~\eqref{Eq:NestFlowE} is a generalization of the energy functional in~\cite{SuBoydCandes14}, who were the first to point out that accelerated gradient descent ($p=2$, Euclidean case) in continuous time corresponds to the second order equation $\ddot X_t + \frac{3}{t} \dot X_t + \nabla f(X_t) = 0$, and proved a matching $O(1/t^2)$ convergence rate~\cite[Theorem~3.2]{SuBoydCandes14}.
They also remarked on the significance of $3$ as being the smallest value of $r$ such that the modified equation 
$\ddot X_t + \frac{r}{t} \dot X_t + \nabla f(X_t) = 0$ 
has the same inverse quadratic $O((r-1)^2/t^2)$ convergence rate, and this guarantee breaks for $r < 3$, so there is a ``phase transition'' at $r=3$. 

We can explain the ``phase transition'' as follows, setting $r = p+1$. 
If we use $\frac{p+1}{t} \dot X_t$ as the velocity term in~\eqref{Eq:NestFlow}, then we should increase the weight of $\nabla f(X_t)$ to $\frac{p^2}{4}t^{p-2}$ in order to get the optimal convergence rate $O(1/t^p$)~\eqref{Eq:NestFlowRate}.
Indeed, we can generalize the energy functional~\eqref{Eq:NestFlowE} to $\E_t' = \rho_t(f(X_t)-f^*) + D_h(x^*,X_t+\frac{t}{p}\dot X_t)$ for any increasing $\rho_t > 0$ with $\rho_0 = 0$. By the same calculation~\eqref{Eq:NestFlowEdot}, if $\ddot X_t + \frac{p+1}{t} \dot X_t + \frac{p^2}{t^2} \rho_t \nabla f(X_t) = 0$, then $\dot{\E_t'} \le 0$ as long as $\rho_t \le Ct^p$, yielding convergence rate $O(1/\rho_t)$.
In particular, the equation $\ddot X_t + \frac{r}{t} \dot X_t + \nabla f(X_t) = 0$ from~\cite{SuBoydCandes14} is the case $\rho_t = t^2/(r-1)^2$ with convergence rate $O(1/\rho_t) = O((r-1)^2/t^2)$, consistent with their result. 
So indeed $3$ is special, as $3 = p+1$ when $p=2$.
Using $r = p+1 \ge 3$ requires weighting $\nabla f(X_t)$ by $\frac{p^2}{4}  t^{p-2}$, which is necessary for the $O(1/t^2)$ rate in continuous time. 

\subsection{Discretizing Nesterov flow}
\label{Sec:NestFlowDisc}

In this section we examine how to discretize Nesterov flow~\eqref{Eq:NestFlow} so as to preserve the convergence guarantee. Following the approach in Section~\ref{Sec:EpsDelta}, we choose to discretize the equivalent equation~\eqref{Eq:NestFlow2}, which can be written as a system of two first order equations:
\begin{subequations}\label{Eq:NestFlowXZ}
\begin{align}
Z_t &= X_t + \frac{t}{p} \dot X_t  \label{Eq:NestFlowXZa} \\
\frac{d}{dt} \nabla h(Z_t) &= -Cpt^{p-1} \nabla f(X_t).   \label{Eq:NestFlowXZb}
\end{align}
\end{subequations}
Now suppose we discretize $X_t,Z_t$ into sequences $x_k,z_k$ with time step $\delta > 0$, that is, if $x_k = X_t$ then $x_{k+1} = X_{t+\delta} = X_t+\delta \dot X_t$, and similarly for $z_k = Z_t$, $z_{k+1} = Z_{t+\delta} = Z_t+\delta \dot Z_t$. This means $k$ discrete iterations, each corresponding to a jump of length $\delta$, are equivalent to the elapse of $t = \delta k$ continuous time.

Under this identification,~\eqref{Eq:NestFlowXZa} becomes $z_k = x_k + \frac{t}{p} \frac{1}{\delta}(x_{k+1}-x_k)$, or equivalently:
\begin{align}\label{Eq:NestFlowAlg1}
x_{k+1} = \frac{p}{k} z_k + \frac{k-p}{k} x_k
\end{align}
which is the same as the $x$-update in $\aG_p$~\eqref{Eq:AGMx}, except here we use $x_k$ instead of $y_k$ (which is currently not in the algorithm). Moreover,~\eqref{Eq:NestFlowAlg1} uses convex weight $\frac{p}{k}$, but it is equivalent to the weight $\frac{p}{k+p} = \frac{p}{k} + o(\frac{1}{k})$ in~\eqref{Eq:AGMx}. Note, in~\eqref{Eq:NestFlowAlg1} there is no $\delta$.

Similarly,~\eqref{Eq:NestFlowXZb} becomes $\frac{1}{\delta}(\nabla h(z_{k+1}) - \nabla h(z_k)) = -Cp(\delta k)^{p-1} \nabla f(x_k)$, which we can recognize as the optimality condition of a mirror descent step:
\begin{align}\label{Eq:NestFlowAlg2}
z_{k+1} = \arg\min_z \,\left\{ Cp \, k^{p-1} \langle \nabla f(x_k), z \rangle + \frac{1}{\delta^p} D_h(z,z_k) \right\}
\end{align}
which is the same as he $z$-update in $\aG_p$~\eqref{Eq:AGMz2}, except here we use $\nabla f(x_k)$ instead of $\nabla f(y_k)$; moreover,~\eqref{Eq:NestFlowAlg2} uses $k^{p-1}$ instead of the equivalent weighting $(k+1)^{(p-1)} = \Theta(k^{p-1})$ in~\eqref{Eq:AGMz2}. We also see the scaling  $\epsilon = \delta^p$ of the step size $\epsilon$ in~\eqref{Eq:NestFlowAlg2} and the time step $\delta$ in the discretization.

In principle, the two updates~\eqref{Eq:NestFlowAlg1},~\eqref{Eq:NestFlowAlg2} define an algorithm that ``implements'' Nesterov flow~\eqref{Eq:NestFlowXZ} in discrete time. However, we also want a matching convergence rate guarantee $O(1/\epsilon k^p)$ (since $\epsilon k^p = (\delta k)^p = t^p$), and unfortunately that doesn't seem possible with only~\eqref{Eq:NestFlowAlg1},~\eqref{Eq:NestFlowAlg2}. We can try to follow the approach in the proof of Theorem~\ref{Thm:AGM}, and attempt to establish upper and lower estimates of the objective $f$ by the estimate function $\psi_k$. However, a key step in the proof is showing that the remainder of the expression~\eqref{Eq:AGMProp3} is nonnegative, for which we need the result~\eqref{Eq:HigherGradLem} in Lemma~\ref{Lem:HigherGrad} as well as the sequence $y_k$.
Thus, we can view the accelerated gradient algorithm $\aG_p$~\eqref{Eq:AGM} as a discretization of Nesterov flow~\eqref{Eq:NestFlowXZ}, with the introduction of an additional sequence $y_k$~\eqref{Eq:AGMy} whose purpose is to guarantee inequality~\eqref{Eq:HigherGradLem}, 
which implies the matching convergence rate $O(1/\epsilon k^p)$.

It is curious that we need the sequence $y_k$ satisfying~\eqref{Eq:HigherGradLem} to make the convergence proof work in discrete time. This $y_k$ differs from $x_k$ by a smaller time scale $\epsilon^{\frac{1}{p-1}} < \epsilon^{\frac{1}{p}} = \delta$~\eqref{Eq:xySmall}, so as $\delta \to 0$, $x_k$ and $y_k$ have the same continuous time limit $X_t=Y_t$. We also note that inequality~\eqref{Eq:HigherGradLem} is the only place where the $(p-1)$-st order smoothness of $f$ is needed (by $\G_p$, which is used by $y_k$~\eqref{Eq:AGMy}). It would be interesting to see whether it is possible to replace $\G_p$ in~\eqref{Eq:AGMy} by another algorithm that guarantees the same inequality~\eqref{Eq:HigherGradLem} under a weaker assumption.


\subsection{Interpretation as Euler-Lagrange equation}
\label{Sec:NestFlowEL}


Nesterov flow~\eqref{Eq:NestFlow} looks similar to the second order damped harmonic oscillator equation from physics, 
but with a subtle difference.
Recall, in classical mechanics we typically model friction as a velocity-dependent force. The equation of motion is then a second order equation involving both time derivatives of $X_t$, e.g., $\ddot X_t + 2\zeta \dot X_t + X_t = 0$ for damped harmonic oscillator with ``damping ratio'' $\zeta > 0$; whereas the ideal (frictionless) harmonic oscillator $\ddot X_t = -X_t$ involves no velocity term. The presence of $2\zeta\dot X_t$ in damped harmonic oscillator changes the nature of the system, from {\em conservative} (conserves {\em energy} $\E_t = \frac{1}{2} \|\dot X_t\|^2 + \frac{1}{2} \|X_t\|^2$, so ideal harmonic oscillator never stops) to {\em dissipative} (dissipates energy, the system stabilizes, and the curve $X_t$ converges).

Like damped harmonic oscillator, Nesterov flow~\eqref{Eq:NestFlow} is also a second order equation involving both time derivatives of $X_t$, although note that the velocity term in Nesterov flow has time-varying coefficient $\frac{p+1}{t}$. Nevertheless, we can still capture Nesterov flow under the same general framework of dissipative system, but with ``logarithmic damping'' (instead of linear); see Section~\ref{Sec:Lag}.

Concretely, we observe that Nesterov flow can be interpreted as the {\em Euler-Lagrange equation}:
\begin{align}\label{Eq:EL}
\frac{d}{dt} \left\{\frac{\partial}{\partial v} \L(X_t,\dot X_t,t) \right\} = \frac{\partial}{\partial x} \L(X_t,\dot X_t,t)
\end{align}
where $\L(x,v,t)$ is the {\em (Nesterov) Lagrangian} functional:
\begin{align}\label{Eq:NestLag}
\L(x,v,t) = pt^{p-1} D_h\left(x+\frac{t}{p}v, \, x\right) - Cp\,t^{2p-1} f(x)
\end{align}
defined for any point $x \in \X$, tangent vector $v \in \TxX$, and time $t \in \R$. In~\eqref{Eq:EL}, $\frac{\partial}{\partial v} \L(X_t,\dot X_t,t)$ is the partial derivative of $\L(x,v,t)$ with respect to $v$ evaluated at $(x,v,t) = (X_t,\dot X_t,t)$, and similarly for $\frac{\partial}{\partial x} \L(X_t,\dot X_t,t)$. For the Nesterov Lagrangian~\eqref{Eq:NestLag}, we can calculate these derivatives explicitly and verify that~\eqref{Eq:EL} above is indeed equivalent to Nesterov flow~\eqref{Eq:NestFlow}.

Recall, the Euler-Lagrange equation~\eqref{Eq:EL} is a necessary (and often sufficient) condition for $X_t$ to be a stationary point for the following variational problem, for any time $t_0 < t_1$~\cite[Theorem~1]{Gelfand}:
\begin{align}\label{Eq:Action}
\min \int_{t_0}^{t_1} \L(X_t,\dot X_t,t) \, dt
\end{align}
where the minimization is performed over all continuously differentiable curves $X_t$ with fixed endpoints $X_{t_0} = x_0, X_{t_1} = x_1 \in \X$.
The objective function in~\eqref{Eq:Action} is typically called the {\em action} $\cA(X)$, and the problem~\eqref{Eq:Action} is known as the 
{\em principle of least action}, which has played an important role as an equivalent reformulation of much of classical physics.\footnote{Chiefly among them Newton's law of motion $\ddot X_t = -\nabla f(X_t)$, which comes from  the {\em ideal Lagrangian} $\L_0(x,v,t) = \frac{1}{2}\|v\|^2 - f(x)$, where here $f$ is the ``potential'' function generating the ``force'' $F(x) = -\nabla f(x)$.}

Potential external connections aside, 
the significance of this observation is in making us aware of the rich structure in the family of Nesterov flows. Indeed, the interpretation of Nesterov flow as  Euler-Lagrange equation allows us access to the techniques and results from (for example) calculus of variations, which may not be applicable to the first order rescaled gradient flows.\footnote{Although, we can interpret rescaled gradient flow as the ``massless limit'' of a Lagrangian flow (Section~\ref{Sec:Mass}).}
And we believe that the discrete time accelerated algorithm $\aG_p$~\eqref{Eq:AGM}---whose convergence proof at first glance looks like an ``algebraic trick''---is actually exploiting this structure.

Furthermore, it turns out that the family of Nesterov Lagrangians~\eqref{Eq:NestLag} is particularly nice, and
in fact can be extended to a larger class of Lagrangians that preserves much of the nice properties.

\section{A Lagrangian view of acceleration}
\label{Sec:Lag}

We introduce the family of {\em Bregman Lagrangians}:
\begin{align}\label{Eq:BregLag}
\L_{\alpha,\beta,\gamma}(x,v,t) \,=\, e^{\gamma_t}\left(e^{2\alpha_t} D_h\left(x+e^{-\alpha_t} v, x\right) - e^{\beta_t} f(x) \right)
\end{align}
where $\alpha_t \in \R$ is the {\em scale function}, $\beta_t \in \R$ the {\em weight function}, and $\gamma_t \in \R$ the {\em damping function}.\, 
In the Euclidean case $h(x) = \frac{1}{2} \|x\|^2$ ($\ell_2$-norm), the Lagrangian~\eqref{Eq:BregLag} simplifies to (notice no $\alpha_t$):
\begin{align}\label{Eq:BregLagEuc}
\L_{\alpha,\beta,\gamma}(x,v,t) \,=\, e^{\gamma_t}\left(\frac{1}{2} \|v\|^2 - e^{\beta_t} f(x) \right).
\end{align}

\paragraph{Ideal scaling.}
In general, $\alpha_t,\beta_t,\gamma_t$ in~\eqref{Eq:BregLag} can be arbitrary, but we find there is an {\em ideal scaling} that is necessary for some results to hold (e.g.,~\eqref{Eq:IdeScaGam} simplifies~\eqref{Eq:ELBreg0} to~\eqref{Eq:ELBreg}):
\begin{subequations}\label{Eq:IdeSca}
\begin{align}
\dot \beta_t \,&=~\,2\dot \alpha_t + e^{\alpha_t}   \label{Eq:IdeScaBet} \\
\dot \gamma_t \,&=\, - \dot \alpha_t + e^{\alpha_t}  \label{Eq:IdeScaGam}.
\end{align}
\end{subequations}

\paragraph{Euler-Lagrange equation.}
For general functions $\alpha_t,\beta_t,\gamma_t$, the Euler-Lagrange equation~\eqref{Eq:EL} $\frac{d}{dt}\{\frac{\partial}{\partial v} \L(X_t,\dot X_t,t)\} = \frac{\partial}{\partial x} \L(X_t,\dot X_t,t)$ for the Bregman Lagrangian $\L \equiv \L_{\alpha,\beta,\gamma}$~\eqref{Eq:BregLag} is given by:
\begin{equation}\label{Eq:ELBreg0}
\begin{split}
\ddot X_t &+ \left(- \dot \alpha_t + e^{\alpha_t}\right) \dot X_t - e^{\beta_t} \, \nabla^2 h\left(X_t+e^{-\alpha_t} \dot X_t\right)^{-1} \nabla f(X_t) \\
&+ e^{\alpha_t}\left(\dot \gamma_t - \dot \alpha_t + e^{\alpha_t}\right) \nabla^2 h\left(X_t+e^{-\alpha_t} \dot X_t\right)^{-1} \left(\nabla h\left(X_t+e^{-\alpha_t} \dot X_t\right) - \nabla h(X_t)\right) = 0.
\end{split}
\end{equation}
If $\gamma_t$ satisfies the ideal scaling~\eqref{Eq:IdeScaGam}, then the Euler-Lagrange equation~\eqref{Eq:ELBreg0} simplifies to:
\begin{equation}\label{Eq:ELBreg}
\ddot X_t + \dot \gamma_t \, \dot X_t + e^{\beta_t} \, \nabla^2 h\left(X_t + e^{-\alpha_t} \dot X_t\right)^{-1} \nabla f(X_t) = 0
\end{equation}
which we call {\em Bregman flow}.
In the Euclidean case~\eqref{Eq:BregLagEuc}, it simplifies to $\ddot X_t + \dot \gamma_t \, \dot X_t + e^{\beta_t} \, \nabla f(X_t) = 0$.

\paragraph{Convergence rate via energy functional.}
Generalizing~\eqref{Eq:NestFlowE}, we define the {\em energy functional}:
\begin{align}\label{Eq:BregFlowE}
\E_t = e^{\beta_t-2\alpha_t} (f(X_t)-f^*) + D_h\left(x^*, X_t+e^{-\alpha_t} \dot X_t \right).
\end{align}
Assuming the ideal scaling $\dot \gamma_t = - \dot \alpha_t + e^{\alpha_t}$~\eqref{Eq:IdeSca}, $\E_t$ has time derivative:
\begin{equation}\notag
\begin{split}
\dot \E_t
&=\, (\dot \beta_t-2\dot \alpha_t) e^{\beta_t-2\alpha_t} (f(X_t)-f^*) \,+\, e^{\beta_t-2\alpha_t} \,\langle \nabla f(X_t), \dot X_t \rangle\\
&~~~~~~~~~ +\, e^{-\alpha_t} \, \left\langle \nabla^2 h\left(X_t+e^{-\alpha_t} \dot X_t\right) (\ddot X_t + \dot \gamma_t \dot X_t), \, X_t - x^* + e^{-\alpha_t} \dot X_t \right\rangle.
\end{split}
\end{equation}
If $X_t$ satisfies the Euler-Lagrange equation~\eqref{Eq:ELBreg}, then $\dot \E_t$ simplifies to:
\begin{align}\notag
\dot \E_t \,=\, (\dot \beta_t-2\dot \alpha_t) e^{\beta_t-2\alpha_t}(f(X_t)-f^*) - e^{\beta_t-\alpha_t} \langle \nabla f(X_t), X_t-x^* \rangle.
\end{align}
Now invoking the convexity of $f$~\eqref{Eq:Conv}, we can bound the expression above by:
\begin{align}\label{Eq:BregFlowEdot2}
\dot \E_t \,\le\, e^{\beta_t-2\alpha_t} \left(\dot \beta_t - 2\dot \alpha_t - e^{\alpha_t} \right)(f(X_t) - f^*).
\end{align}
Thus, we see that if $\dot \beta_t \le 2\dot \alpha_t + e^{\alpha_t}$, then $\dot \E_t \le 0$, which implies $\E_t \le \E_0$ for all $t \ge 0$. Since $h$ is convex, $D_h(x^*, X_t+e^{-\alpha_t} \dot X_t) \ge 0$, therefore we get the bound $e^{\beta_t-2\alpha_t}(f(X_t) - f^*) \le \E_0$. This gives a convergence rate $\rho_t = 2\alpha_t-\beta_t$, which we summarize in the following theorem.
Note, for any $\alpha_t$, the optimal choice of $\beta_t$ in~\eqref{Eq:BregRate} below is given by the ideal scaling~\eqref{Eq:IdeScaBet}, which yields rate $\rho_t = 2\alpha_t-\beta_t = \int_0^t e^{\alpha_s} ds$.

\begin{theorem}\label{Thm:BregRate}
If $f$ and $h$ are convex and the ideal scaling~\eqref{Eq:IdeScaGam} holds, then 
for any $\alpha_t,\beta_t$ satisfying $\dot \beta_t \le 2\dot \alpha_t + e^{\alpha_t}$, the curve $X_t$ governed by the Euler-Lagrange equation~\eqref{Eq:ELBreg} has convergence rate $\rho_t = 2\alpha_t-\beta_t$:
\begin{align}\label{Eq:BregRate}
f(X_t) - f^* \,\le\, \frac{\E_0}{e^{\beta_t-2\alpha_t}} \,=\, O\left(e^{2\alpha_t-\beta_t} \right).
\end{align}
\end{theorem}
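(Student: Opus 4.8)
The plan is to run an energy (Lyapunov) argument with the functional $\E_t$ from~\eqref{Eq:BregFlowE}, exactly paralleling the special case treated in Section~\ref{Sec:NestFlowRate}. First I would differentiate $\E_t = e^{\beta_t-2\alpha_t}(f(X_t)-f^*) + D_h\!\left(x^*,\, X_t + e^{-\alpha_t}\dot X_t\right)$ in $t$. Writing $Z_t := X_t + e^{-\alpha_t}\dot X_t$, the function-value term contributes $(\dot\beta_t - 2\dot\alpha_t)\,e^{\beta_t-2\alpha_t}(f(X_t)-f^*) + e^{\beta_t-2\alpha_t}\langle\nabla f(X_t),\dot X_t\rangle$, and for the Bregman term I would use $\frac{d}{dt} D_h(x^*,Z_t) = -\langle \frac{d}{dt}\nabla h(Z_t),\, x^*-Z_t\rangle = \langle \nabla^2 h(Z_t)\,\dot Z_t,\; Z_t - x^*\rangle$. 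The ideal scaling $\dot\gamma_t = -\dot\alpha_t + e^{\alpha_t}$~\eqref{Eq:IdeScaGam} is used precisely so that $\dot Z_t = e^{-\alpha_t}(\ddot X_t + \dot\gamma_t\,\dot X_t)$; this recovers the displayed formula for $\dot\E_t$ stated just before the theorem.

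The key step is then substituting the Euler-Lagrange equation~\eqref{Eq:ELBreg}, which reads $\ddot X_t + \dot\gamma_t\,\dot X_t = -e^{\beta_t}\,\nabla^2 h(Z_t)^{-1}\nabla f(X_t)$, into that inner product. The factor $\nabla^2 h(Z_t)$ cancels the inverse Hessian, collapsing the term to $-e^{\beta_t-\alpha_t}\langle\nabla f(X_t),\, Z_t - x^*\rangle = -e^{\beta_t-\alpha_t}\langle\nabla f(X_t),\, X_t - x^*\rangle - e^{\beta_t-2\alpha_t}\langle\nabla f(X_t),\dot X_t\rangle$. The last summand exactly cancels the $e^{\beta_t-2\alpha_t}\langle\nabla f(X_t),\dot X_t\rangle$ from the function-value term, leaving $\dot\E_t = (\dot\beta_t-2\dot\alpha_t)\,e^{\beta_t-2\alpha_t}(f(X_t)-f^*) - e^{\beta_t-\alpha_t}\langle\nabla f(X_t),\, X_t-x^*\rangle$.

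Next I would invoke convexity of $f$ in the form $\langle\nabla f(X_t),\, X_t-x^*\rangle \ge f(X_t)-f^*$ (this is~\eqref{Eq:Conv} with $x=X_t$, $y=x^*$), which yields $\dot\E_t \le e^{\beta_t-2\alpha_t}\bigl(\dot\beta_t - 2\dot\alpha_t - e^{\alpha_t}\bigr)(f(X_t)-f^*)$, i.e.~\eqref{Eq:BregFlowEdot2}. Under the hypothesis $\dot\beta_t \le 2\dot\alpha_t + e^{\alpha_t}$ the parenthesized coefficient is nonpositive and $f(X_t)-f^* \ge 0$, so $\dot\E_t \le 0$; hence $\E_t \le \E_0$ for all $t \ge 0$. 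Finally, convexity of $h$ gives $D_h(x^*,Z_t) \ge 0$, so $e^{\beta_t-2\alpha_t}(f(X_t)-f^*) \le \E_t \le \E_0$, and dividing through gives $f(X_t)-f^* \le \E_0\, e^{2\alpha_t-\beta_t} = O(e^{2\alpha_t-\beta_t})$, with rate $\rho_t = 2\alpha_t-\beta_t$.

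I expect the only real obstacle to be the bookkeeping in the first two paragraphs: one must differentiate carefully and recognize that the ideal-scaling identity is exactly what makes $\dot Z_t$ line up with the left-hand side of the Euler-Lagrange equation, so that the Hessian and its inverse annihilate each other; once that cancellation occurs the remainder is routine. I would also note in passing that the argument uses neither smoothness of $f$ nor uniform convexity of $h$---only the two stated convexity hypotheses---and that saturating the inequality by taking $\beta_t$ as in the ideal scaling~\eqref{Eq:IdeScaBet} optimizes the rate to $\rho_t = \int_0^t e^{\alpha_s}\,ds$.
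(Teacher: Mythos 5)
Your proposal is correct and follows essentially the same route as the paper: differentiate the energy functional $\E_t$ from~\eqref{Eq:BregFlowE}, use the ideal scaling~\eqref{Eq:IdeScaGam} to write $\dot Z_t = e^{-\alpha_t}(\ddot X_t + \dot\gamma_t\dot X_t)$ so that the Euler--Lagrange equation~\eqref{Eq:ELBreg} produces the Hessian cancellation, invoke convexity of $f$ to reach~\eqref{Eq:BregFlowEdot2}, and conclude via monotonicity of $\E_t$ and nonnegativity of $D_h$. No gaps; the bookkeeping you flag as the ``only real obstacle'' is exactly the calculation the paper carries out just before the theorem statement, and you have reproduced it faithfully.
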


\paragraph{Example: Nesterov Lagrangian.}
As our motivating example, Nesterov Lagrangian~\eqref{Eq:NestLag} is a special case of the Bregman Lagrangian~\eqref{Eq:BregLag} when we choose:
\begin{subequations}\label{Eq:NestParam}
\begin{align}
\alpha_t \,&=\, -\log t + \log p   \label{Eq:NestParamA} \\
\beta_t \,&=\, (p-2)\log t + 2\log p + \log C    \label{Eq:NestParamB} \\
\gamma_t \,&=\, (p+1) \log t - \log p   \label{Eq:NestParamC} \\
\rho_t \,&=\, p \log t   \label{Eq:NestParamR}
\end{align}
\end{subequations}
which satisfy the ideal scaling~\eqref{Eq:IdeSca}\footnote{Note,~\eqref{Eq:IdeSca} only determines $\beta_t,\gamma_t$ up to constant terms, but we will see in Section~\ref{Sec:TimeBreg} that~\eqref{Eq:NestParam} is the proper choice of constants from the perspective of time dilation.}
for any $p > 0$.
With these parameter choices, we can verify that Bregman Lagrangian~\eqref{Eq:BregLag} reduces to Nesterov Lagrangian~\eqref{Eq:NestLag}, Bregman flow~\eqref{Eq:ELBreg} reduces to Nesterov flow~\eqref{Eq:NestFlow}, and the convergence rate~\eqref{Eq:BregRate} recovers our earlier result~\eqref{Eq:NestFlowRate}.

However, the Bregman Lagrangian family~\eqref{Eq:BregLag} is much more general, and we wish to study its more general properties (which we can then specialize to any subfamily, including Nesterov~\eqref{Eq:NestLag}).
To expand the repertoire of Bregman Lagrangians, in Section~\ref{Sec:ExpConv} we study the family of constant $\alpha_t = \log c$ with rate $\rho_t = ct$, and show connections to the restart scheme proposed by Nesterov~\cite{Nesterov08} to obtain linear convergence in discrete time under uniform convexity assumption.

In the rest of this section, we discuss the interpretation of Bregman Lagrangian as approximating the ``true'' momentum method (Hessian Lagrangian); we will also see how to interpret rescaled gradient flow as the massless limit of a (modified) Lagrangian flow.

\subsection{Bregman Lagrangian as an approximation of Hessian Lagrangian}
\label{Sec:HessLag}

We define the family of {\em Hessian Lagrangian}:
\begin{align}\label{Eq:HessLag}
\L_{\beta,\gamma}(x,v,t) \,=\, e^{\gamma_t} \left(\frac{1}{2} \|v\|^2_{h(x)} - e^{\beta_t} f(x) \right)
\end{align}
where as before $\beta_t \in \R$ is the weight function and $\gamma_t \in \R$ the damping function. The Hessian Lagrangian~\eqref{Eq:HessLag} is a damped and weighted version of the ideal Lagrangian (with Hessian metric) $\L_0(x,v,t) = \frac{1}{2}\|v\|_{h(x)}^2 - f(x)$.

The Bregman divergence, being a first order approximation error, can be seen as approximating the squared Hessian norm:
\begin{align}\label{Eq:BregHess}
e^{2\alpha} D_h\left(x+e^{-\alpha} v, x\right) \,\approx\, \frac{1}{2} \|v\|^2_{h(x)}
\end{align}
for any $x \in \X$, $v \in \TxX$, and $\alpha \in \R$ (such that $x+e^{-\alpha} v \in \X$).
Therefore, we can interpret Bregman Lagrangian~\eqref{Eq:BregLag} as approximating the Hessian Lagrangian~\eqref{Eq:HessLag}: $\L_{\alpha,\beta,\gamma} \approx \L_{\beta,\gamma}$, for all $\alpha_t,\beta_t,\gamma_t$.
Note in the Euclidean case~\eqref{Eq:BregHess} is an equality so Bregman and Hessian Lagrangians coincide.

The Euler-Lagrange equation for the Hessian Lagrangian~\eqref{Eq:HessLag} is given by:
\begin{align}\label{Eq:ELHess}
\frac{1}{2} \nabla^3 h(X_t) \, \dot X_t \, \dot X_t \,+\, \nabla^2 h(X_t) \left(\ddot X_t + \dot \gamma_t \, \dot X_t \right) + e^{\beta_t}\nabla f(X_t) = 0
\end{align}
where the third order derivative $\nabla^3 h$ comes from being the derivative of the metric tensor $\nabla^2 h$. Notice if we remove the first term from~\eqref{Eq:ELHess}, then we recover the Euler-Lagrange equation~\eqref{Eq:ELBreg} for the Bregman Lagrangian in the case $\alpha_t = \infty$ (which is the ideal case since by L'H\^{o}pital's rule, $\lim_{\alpha \to \infty} e^{2\alpha}D_h(x+e^{-\alpha} v, x) = \frac{1}{2} \|v\|^2_{h(x)}$).
Thus, Bregman flow~\eqref{Eq:ELBreg} can be interpreted as an approximation to the {\em Hessian flow}~\eqref{Eq:ELHess} that removes the $\nabla^3 h$ term (and compensates by using $\nabla^2 h(X_t+e^{-\alpha_t} \dot X_t)$).
Note, Bregman flow can be equivalently written as~\eqref{Eq:NestFlowXZ}, which can then be discretized using mirror descent to yield an algorithm~\eqref{Eq:AGM}, which does not require $\nabla^2 h$ but only $\nabla h$.
Therefore, this offers an interpretation of Nesterov's acceleration technique as a clever approximate discretization of the Hessian Lagrangian, which reduces the complexity of the required computation from $\nabla^3 h$ in~\eqref{Eq:ELHess} down to $\nabla h$ in~\eqref{Eq:AGM}.

However, it is actually still unclear why the Hessian Lagrangian is the right thing to approximate. 
For example, we do not have any convergence guarantee on the Hessian flow. The convergence rate $\rho_t = \int_0^t e^{\alpha_s} ds$~\eqref{Eq:BregRate} for Bregman Lagrangian tends to $\infty$ as $\alpha \to \infty$. This suggests that in the ideal limit $\alpha_t = \infty$, Hessian flow has instantaneous convergence ($f(X_t) - f^* \le 0$ for any $t > 0$), although note also that under the ideal scaling~\eqref{Eq:IdeSca}, $\beta, \gamma \to \infty$ as $\alpha \to \infty$ so this limit is not well defined.

Let us take a step back and notice, in approximating Hessian norm by Bregman divergence~\eqref{Eq:BregHess} we have introduced a scale variable $\alpha \in \R$, which provides the conversion factor between the scales of the point $x \in \X$ and the tangent vector $v \in \TxX$. 
Ordinarily, we treat $v$ as operating at a small (infinitesimal) scale $\varepsilon > 0$ where linear approximation holds, e.g., $f(x+\varepsilon v) = f(x) + \varepsilon \langle \nabla f(x), v \rangle$. But in practice, how should we choose $\alpha$? As noted, the ideal is $\alpha = \infty$, in which case Bregman Lagrangian reduces to Hessian Lagrangian. However, as soon as $\alpha = \log(1/\varepsilon) < \infty$, there is the ideal scaling~\eqref{Eq:IdeSca} in Bregman Lagrangian that binds $\alpha,\beta,\gamma$ together, and renders the limit $\alpha \to \infty$ nonsensical. But in return, the ideal scaling gives us convergence rate $\rho_t = \int_0^t e^{\alpha_s} ds$, 
which is better for larger $\alpha_t$.
For example, Nesterov Lagrangian~\eqref{Eq:NestLag} uses logarithmic $\alpha_t = -\log t + \log p$, which yields $p$-sublinear rate $\rho_t = p\log t$. The exponential analog of Nesterov in Section~\ref{Sec:expLag} uses constant $\alpha_t = \log c$, which yields linear rate $\rho_t = ct$.

\subsection{Rescaled gradient flow as massless limit of Lagrangian flow}
\label{Sec:Mass}

We define the {\em $p$-th power Lagrangian}, $p > 0$:
\begin{align}\label{Eq:pLag}
\L(x,v,t) \,=\, e^{t/m} \left(\frac{m}{p} \|v\|^p - f(x)\right)
\end{align}
where $m > 0$ is the {\em mass} of our (fictitious) particle. In Section~\ref{Sec:Lag} we implicitly set $m=1$, but here we are interested in the limiting behavior $m \to 0$, with $p$ fixed.

The Euler-Lagrange equation $\frac{d}{dt} \{ \frac{\partial}{\partial v} \L(X_t,\dot X_t,t)\} = \frac{\partial}{\partial x} \L(X_t,\dot X_t,t)$ for the Lagrangian~\eqref{Eq:pLag} is:
\begin{align}\label{Eq:ELp}
\|\dot X_t\|^{p-2} \big(m\ddot X_t + \dot X_t \big) + m(p-2) \|\dot X_t\|^{p-4} \langle \ddot X_t,\dot X_t \rangle \, \dot X_t + \nabla f(X_t) = 0.
\end{align}
So as $m \to 0$, the Euler-Lagrange equation~\eqref{Eq:ELp} converges to the rescaled gradient flow~\eqref{Eq:RescGradFlow2}:
\begin{align}\notag
\|\dot X_t\|^{p-2} \dot X_t + \nabla f(X_t) = 0.
\end{align}
This gives an interpretation of rescaled gradient flow---which is a first order equation---as the massless limit of the Lagrangian flow~\eqref{Eq:ELp}, which is a second order equation. However, notice that this massless limit also corresponds to infinite momentum: $e^{t/m} m \|\dot X_t\|^{p-2} \dot X_t \to \infty$ as $m \to 0$. This means as $m \to 0$, our particle (whose evolution is governed by~\eqref{Eq:ELp}) becomes infinitely massive. In the limit $m = 0$ (rescaled gradient flow) there is no oscillation; the particle just rolls downhill (with infinite ``friction'') and stops at the minimum $x^*$ as soon as the force $-\nabla f$ vanishes.

Note, this may run contrary to the idea that adding an acceleration/momentum term amounts to preventing oscillation, hence the faster convergence. What our interpretation suggests is the opposite: The first order rescaled gradient flow is the case of infinite momentum and no oscillation (rather than no momentum and big oscillation), and the effect of moving to the second order Lagrangian flow is to unwind the curve (to finite momentum) where it travels faster, but there is now oscillation. For example, the case $p=2$ in~\eqref{Eq:ELp} is the damped harmonic oscillator (when $f(x) = \frac{1}{2}\|x\|^2$). This point of view is also consistent with the work~\cite{ODonoghue15}, who addressed this oscillation issue by a restart scheme (cf.~Section~\ref{Sec:NestRest}).
See also Appendix~\ref{Sec:NaturalMasslessLim} for an interpretation of natural gradient flow as the massless limit of Hessian Lagrangian flow.

\section{Linear convergence rate via uniform convexity}
\label{Sec:ExpConv}

We study the exponential analog of Nesterov Lagrangian, which uses constant scale function and has linear convergence rate. We also show how to extend Nesterov's restart scheme~\cite{Nesterov08} in discrete time to get linear convergence rate when the objective function is uniformly convex. 

\subsection{Exponential Nesterov Lagrangian family}
\label{Sec:expLag}

We define the {\em exp-Nesterov Lagrangian}, for any $c > 0$:
\begin{align}\label{Eq:expLag}
\L(x,v,t) \,=\, ce^{ct} \left(D_h\left(x+\frac{1}{c} v, \, x \right) - e^{ct} f(x) \right).
\end{align}
This is the Bregman Lagrangian~\eqref{Eq:BregLag} with the following choices, which satisfy the ideal scaling~\eqref{Eq:IdeSca}:
\begin{subequations}\label{Eq:expParam}
\vspace{-15pt}
\begin{align}
\alpha_t \,&=\, \log c \label{Eq:expParamA}  \\
\beta_t \,&=\, ct + 2\log c \label{Eq:expParamB} \\
\gamma_t \,&=\, ct-\log c \label{Eq:expParamC} \\
\rho_t \,&=\, ct.  \label{Eq:expParamR}
\end{align}
\end{subequations}
The Euler-Lagrange equation for the Lagrangian~\eqref{Eq:expLag} is given by:
\begin{equation}\label{Eq:ELexp}
\ddot X_t \,+\, c \dot X_t \,+\, c^2e^{ct} \,\nabla^2 h\left(X_t+\frac{1}{c}\dot X_t\right)^{-1} \nabla f(X_t) = 0.
\end{equation}
By Theorem~\ref{Thm:BregRate}, the {\em exp-Nesterov flow} $X_t$~\eqref{Eq:ELexp} has linear convergence rate $\rho_t = ct$ for convex $f$:
\begin{align}\label{Eq:expRate}
f(X_t) - f^* \,\le\, \frac{f(X_0)-f^* + D_h(x^*, X_0 + \frac{1}{c} \dot X_t)}{e^{ct}} \,=\, O(e^{-ct}).
\end{align}
Thus, whereas Nesterov flow \eqref{Eq:NestFlow} has sublinear rate, exp-Nesterov flow \eqref{Eq:ELexp} has linear rate.

Let us examine how to discretize exp-Nesterov flow. As in Section~\ref{Sec:NestFlowDisc}, we first write~\eqref{Eq:ELexp} as:
\begin{subequations}\label{Eq:expFlowXZ}
\begin{align}
Z_t &= X_t + \frac{1}{c} \dot X_t  \label{Eq:expFlowXZa} \\
\frac{d}{dt} \nabla h(Z_t) &= -ce^{ct} \nabla f(X_t).   \label{Eq:expFlowXZb}
\end{align}
\end{subequations}
We discretize $X_t,Z_t$ into sequences $x_k,z_k$ with time step $\delta > 0$ as before, so $t = \delta k$. Using mirror descent to implement~\eqref{Eq:expFlowXZb}, we obtain discrete time equations similar to~\eqref{Eq:AGMx},~\eqref{Eq:AGMz2}:
\begin{subequations}\label{Eq:expFlowAlg}
\begin{align}
z_k \,&=\, \arg\min_z \,\left\{ ce^{c\delta k} \langle \nabla f(x_k), z \rangle + \frac{1}{\delta} 
D_h(z,z_{k-1}) \right\} \label{Eq:expFlowAlgb} \\
x_{k+1} \,&=\, c\delta z_k + (1-c\delta) x_k.   \label{Eq:expFlowAlga}
\end{align}
\end{subequations}
Note, the weight in~\eqref{Eq:expFlowAlga} is independent of time, but depends on $\delta$; and~\eqref{Eq:expFlowAlgb} suggests the step size $\epsilon = \delta$ in the algorithm.
If our analogy between continuous and discrete time convergence holds, then given the $O(e^{-ct})$ convergence rate~\eqref{Eq:expRate} for $X_t$, we expect a matching $O(\frac{1}{\epsilon} e^{-ck})$ convergence rate in discrete time.

However, it is not clear how to get that with only~\eqref{Eq:expFlowAlg}. If we try to adapt the proof of Theorem~\ref{Thm:AGM} (with the ideal choice $p=\infty$), then we find that we need to introduce a sequence $y_k$ satisfying the following analog of Lemma~\ref{Lem:HigherGrad}, in order to conclude a convergence rate $O(\delta e^{-c\delta k})$:
\begin{align}\label{Eq:expNeeded}
\langle f(y_k), x_k-y_k \rangle \,\ge\, \delta e^{-c\delta k} \|\nabla f(y_k)\|_*.
\end{align}
Notice, the rates above are consistent if we set $\epsilon = \delta = 1$. But the condition~\eqref{Eq:expNeeded} means we need to make a constant improvement in each iteration from $x_k$ to $y_k$, although we are also free to be creative with how to construct $y_k$ and impose any assumption on $f$.

\subsection{Restart scheme for uniformly convex objective function}
\label{Sec:NestRest}

We present a generalization of Nesterov's restart scheme~\cite{Nesterov08} that obtains a linear convergence rate when the objective function is both smooth and uniformly convex. This in a sense can be seen as a discrete time version of exp-Nesterov flow~\eqref{Eq:expFlowXZ}, which implements the exponential weighting and the improvement requirement~\eqref{Eq:expNeeded} by {\em running the accelerated gradient algorithm $\aG_p$~\eqref{Eq:AGM} for some amount of time}, within each iteration.

\paragraph{Linear convergence rate for $\G_p$.}
Following~\cite[Section~5]{Nesterov08}, we first show that the $p$-th gradient method $\G_p$ has linear convergence rate if $f$ is uniformly convex. Concretely, consider the following $\G_p$ with larger regularization (like~\eqref{Eq:AGMy}), for $p \ge 2$:
\begin{align}\label{Eq:expGp}
x_{k+1} \,=\, \arg\min_x \left\{ f_{p-1}(x;x_k) + \frac{2}{\epsilon} \cdot \frac{1}{p} \|x-x_k\|^p \right\}
\end{align}
where $\epsilon > 0$ here is a fixed step size. By Lemma~\ref{Lem:HigherGrad}, we know that if $f$ is $\frac{(p-1)!}{\epsilon}$-smooth of order $p-1$, then $\langle \nabla f(x_{k+1}), x_k-x_{k+1} \rangle \geq \frac{1}{4} \epsilon^{\frac{1}{p-1}} \|\nabla f(x_{k+1})\|_*^{\frac{p}{p-1}}$.
If $f$ is $\sigma$-uniformly convex of order $p$, then we also have~\cite[Lemma~3]{Nesterov08} $\|\nabla f(x_{k+1})\|_*^{\frac{p}{p-1}} \ge \frac{p}{p-1} \sigma^{\frac{1}{p-1}} (f(x_{k+1})-f^*)$.
Moreover, if $f$ is convex~\eqref{Eq:Conv}, then:
\begin{align}\label{Eq:expGp2}
f(x_{k+1}) - f^* \,\le\, \frac{f(x_k)-f^*}{1+\frac{1}{4} (\epsilon \sigma)^{\frac{1}{p-1}}}
\,\le\, \frac{f(x_1)-f^*}{\big(1+\frac{1}{4} (\epsilon \sigma)^{\frac{1}{p-1}}\big)^k}.
\end{align}
If the {\em (inverse) condition number} $\kappa = \epsilon \sigma$ is small, then $1+\frac{1}{4} (\epsilon \sigma)^{\frac{1}{p-1}} \approx e^{\kappa^{\frac{1}{p-1}}/4}$.
And again by the smoothness of $f$, we have $f(x_1) \le \min_x \{f_{p-1}(x;x_0) + \frac{2}{\epsilon p} \|x-x_0\|^p\} \le f^* + \frac{3}{\epsilon p} \|x_0-x^*\|^p$.
Therefore,~\eqref{Eq:expGp2} yields the convergence rate $\rho_k = ck$, $c = \frac{1}{4} \kappa^{\frac{1}{p-1}}$, for the $p$-th gradient method $\G_p$~\eqref{Eq:expGp}, generalizing the result of~\cite[(5.6)]{Nesterov08}:
\begin{align}\label{Eq:expGpRate}
f(x_{k+1})-f^* \,\le\, \frac{3\|x_0-x^*\|^p}{\epsilon p \big(1+\frac{1}{4}\kappa^{\frac{1}{p-1}}\big)^k}
\,=\, O\left(\frac{1}{\epsilon}  \, e^{-\kappa^{\frac{1}{p-1}} k/4}\right)
\end{align}
which matches the desired convergence rate $O(\frac{1}{\epsilon} e^{-ck})$ discussed in Section~\ref{Sec:expLag}. 

\paragraph{Improved linear convergence rate for $\aG_p$ with restart.}
We now show that a variant of the accelerated gradient method $\aG_p$ attains a better linear convergence rate than $\G_p$~\eqref{Eq:expGp}. 
Specifically, consider the following restart scheme, generalizing~\cite[(5.7)]{Nesterov08}:
\begin{align}\label{Eq:expRest}
x_{(k+1)m} \,=\, \big(\text{the output $\hat y_m$ of running $\aG_p$~\eqref{Eq:AGM} for $m$ iterations with input $\hat x_0 = x_{km}$}\big)
\end{align}
where $m = 24p/\kappa^{\frac{1}{p}}$, and $\kappa = \epsilon \sigma$ as before is the inverse condition number of $f$. Here we assume we replace the Bregman divergence in the $z$-update~\eqref{Eq:AGMz} by $d_p(z) = \frac{1}{p} \|z-x_0\|^p$, rescaled by its uniform convexity constant $2^{-p+2}$~\eqref{Eq:Dist}. 
The proof of Theorem~\ref{Thm:AGM} still holds in this case, and for concreteness we choose $C = (4p)^{-p}$.

Then, since $f$ is $\sigma$-uniformly convex of order $p$~\eqref{Eq:UC}, and by the bound~\eqref{Eq:AGMRate} from Theorem~\ref{Thm:AGM}:
\begin{align}\label{Eq:expRest2}
\frac{\sigma}{p} \|x_{(k+1)m}-x^*\|^p
\stackrel{\eqref{Eq:UC}}{\le} f(x_{(k+1)m})-f^*
\stackrel{\eqref{Eq:AGMRate}}{\le} \frac{(4p)^p \, 2^{p-2} \|x_{km}-x^*\|^p}{\epsilon m^{(p)}}
\,\le\, \frac{\sigma}{pe} \|x_{km}-x^*\|^p
\end{align}
where the last inequality follows from our choice of $m$. Iterating~\eqref{Eq:expRest2} and rescaling the index $k \equiv \frac{k}{m}$, we obtain $\|x_k-x^*\|^p \le e^{-k/m}\|x_0-x^*\|^p$.
To convert this into a bound on the function value, we use the smoothness of $f$. 
Let $y_k$ be the output of $\G_p$~\eqref{Eq:expGp} with input $x_k$. As noted before, if $f$ is $\frac{(p-1)!}{\epsilon}$-smooth of order $p-1$, then $f(y_k) - f^* \le \frac{3}{\epsilon p} \|x_k-x^*\|^p$. Therefore, we conclude that:
\begin{align}\label{Eq:expRestRate}
f(y_k) - f^* \,\le\, \frac{3\|x_0-x^*\|^p}{\epsilon p \, e^{k/m}} \,=\, O\left(\frac{1}{\epsilon}  \, e^{-\kappa^{\frac{1}{p}} k/24 p}\right)
\end{align}
which matches the convergence rate $O(\frac{1}{\epsilon} e^{-ck})$ as discussed in Section~\ref{Sec:expLag} with $c = \frac{1}{24 p} \kappa^{\frac{1}{p}}$. Note, this linear rate $\rho_t = c\kappa^{\frac{1}{p}}$ has better dependence for small $\kappa = \epsilon \sigma$ than~\eqref{Eq:expGpRate}, generalizing the conclusion of~\cite[(5.8)]{Nesterov08}.
However, the link to continuous time is not as clear as that of the Nesterov family. 


\section{Time dilation: Faster convergence by speeding up time}
\label{Sec:Time}

In this section we introduce the idea of time dilation, and show that a large family of Bregman Lagrangians (which include Nesterov and exp-Nesterov) can be interpreted as the result of speeding up {\em any} single curve.

\subsection{Time dilation}

We recall the argument in Section~\ref{Sec:Intro} that optimization in continuous time is easy because we can get arbitrarily fast convergence. The idea is that once we have a curve $X_t$ that converges at some rate $\rho(t)$, then we can speed it up to $Y_t = X_{\tau(t)}$ with improved convergence rate $\rho(\tau(t)) > \tau(t)$.

Here $\tau = \tau(t) \in \R$ is a {\em time dilation}: A smooth, strictly increasing (hence invertible) function defined on the time domain $\R$ (or a subset of it), whose inverse $\tau^{-1}$ is also smooth.
The set $\sT$ of time dilations forms a group under function composition. The identity element is the {\em identity time}:
\begin{align}\label{Eq:tid}
\tid(t) = t~~~~ \forall \, t \in \R
\end{align}
which is the default time dilation we use, so normally time flows at unit speed: $d\tid/dt = 1$.

Traversing a curve $X_t$ at another time speed $Y_t = X_{\tau(t)}$ is equivalent to replacing the default time dilation $\tid$ by $\tau \in \sT$, so now time flows at speed $d\tau/dt = \dot \tau(t)$. We say that $\tau$ is {\em faster} than $\tid$ if $\dot \tau(t) \ge 1$, in which case shifting from $\tid$ to $\tau$ amounts to speeding up time, and the convergence rate $\rho(\tau(t))$ of $Y_t$ is larger than the original rate $\rho(t)$ of $X_t$ (if $\rho(t)$ is increasing).\footnote{But for convenience, we refer to $Y_t = X_{\tau(t)}$ as the {\em sped-up version} of $X_t$ regardless of whether $\tau$ is faster than $\tid$.}

However, how meaningful is this idea? Recall, our main interest is in understanding the parallel behavior between continuous and discrete time optimization, so even if we have a fast rate in continuous time, it may not be of interest to us if we don't know how to implement it in discrete time (with matching convergence rate).

Recall also from Section~\ref{Sec:Intro} the example of the gradient flow $\dot X_t = -\nabla f(X_t)$, which has convergence rate $\rho_t = -\log t$. The sped-up version $Y_t = X_{\tau(t)}$ satisfies $\dot Y_t = -\dot \tau(t) \nabla f(Y_t)$, but is is not a gradient flow anymore (not of the form $\dot Y_t = -\nabla \tilde f(Y_t)$ for some function $\tilde f$ that does not explicitly depend on time). Similarly, speeding up rescaled gradient flow~\eqref{Eq:RescGradFlow} yields a curve that is not in the rescaled gradient flow family.
While these properties inhibit our understanding of how these curves relate to each other in continuous time, it turns out Bregman Lagrangian flows~\eqref{Eq:ELBreg} have nice properties under time dilation, which we explore next.


\subsection{Bregman Lagrangian family under time dilation}
\label{Sec:TimeBreg}

We show that Bregman Lagrangian family is closed under the action of the time dilation group $\sT$. 
Note, in general it holds that speeding up a Lagrangian curve (i.e., Euler-Lagrange curve $X_t$ for a Lagrangian $\L$) results in another Lagrangian curve, so the space of general Lagrangian curves is closed under time dilation.
Moreover, the Bregman Lagrangians~\eqref{Eq:BregLag} form a special subfamily of the Lagrangian space, and we can characterize precisely how they transform under time dilation.

We begin by noting that the sped-up curve $Y_t = X_{\tau(t)}$ has time derivatives:
\begin{subequations}\label{Eq:TimeYdot}
\begin{align}
\dot Y_t &= \dot \tau(t) \, \dot X_{\tau(t)}   \label{Eq:TmYdota} \\
\ddot Y_t &= \ddot \tau(t) \, \dot X_{\tau(t)} + \dot \tau(t)^2 \ddot X_{\tau(t)}.   \label{Eq:TmYdotb}
\end{align}
\end{subequations}
Thus, if $X_t$ satisfies the Euler-Lagrange equation~\eqref{Eq:ELBreg} for a Bregman Lagrangian $\L = \L_{\alpha,\beta,\gamma}$~\eqref{Eq:BregLag}, then $Y_t = X_{\tau(t)}$ also satisfies the Euler-Lagrange equation for the modified Lagrangian $\L^{(\tau)} = \L_{\alpha^{(\tau)},\wt\beta^{(\tau)},\wt\gamma^{(\tau)}}$, where the parameters $\alpha,\beta,\gamma$ are transformed to $\alpha^{(\tau)},\beta^{(\tau)},\gamma^{(\tau)}$:
\begin{subequations}\label{Eq:TimeABC}
\begin{align}
\alpha_t^{(\tau)} \,&=\, \alpha_{\tau(t)} + \log \dot \tau(t)   \label{Eq:TimeA}  \\
\beta_t^{(\tau)} \,&=\, \beta_{\tau(t)} + 2\log \dot \tau(t)   \label{Eq:TimeB}  \\
\gamma_t^{(\tau)} \,&=\, \gamma_{\tau(t)} - \log \dot \tau(t).   \label{Eq:TimeC}
\end{align}
\end{subequations}
This means each $\tau \in \sT$ induces a map $\L \mapsto \L^{(\tau)}$ on the space of general Bregman Lagrangians:
\begin{align}\label{Eq:BregSpac}
\sL = \{\L_{\alpha,\beta,\gamma} \,\colon\, \alpha_t, \beta_t, \gamma_t \in \R \}.
\end{align}
Furthermore, by chain rule, we see that the transformation~\eqref{Eq:TimeABC} satisfies the composition property $(\alpha^{(\tau)})^{(\theta)} = \alpha^{(\tau \, \circ \, \theta)}$ for all $\tau,\theta \in \sT$, and similarly for $\beta$, $\gamma$ (that is, speeding up by $\tau$ and then $\theta$ is equivalent to speeding up once by $\tau \circ \theta$).
This lifts to the Lagrangian level: $(\L^{(\tau)})^{(\theta)} \,=\, \L^{(\tau \,\circ\, \theta)}$. Formally, this means the mapping $\L \mapsto \L^{(\tau)}$ is a (right) {\em group action} of $\sT$ on $\sL$, namely, a group homomorphism from $\sT$ to the permutation group of $\sL$.

This conclusion extends to Hessian Lagrangian, which is the $\alpha \to \infty$ limit of Bregman Lagrangian; thus, $\sT$ also acts on the space of Hessian Lagrangians with the same transformation rules~\eqref{Eq:TimeB},~\eqref{Eq:TimeC}.\footnote{Note, we can also show that $\sT$ acts on the family of $p$-th power Lagrangian $\L_{\beta,\gamma}(x,v,t) = e^{\gamma_t}(\frac{1}{p}\|v\|^p - e^{\beta_t} f(x))$ (\eqref{Eq:pLag} is the case $\gamma_t = t/m$ and $\beta_t = 0$), where now $\beta$ transforms as $\beta_t^{(\tau)} = \beta_{\tau(t)} + p\log \dot \tau(t)$ (\eqref{Eq:TimeB} is case $p=2$).}

\subsection{The orbit of ideal Bregman Lagrangians}
\label{Eq:TimeBregIdeal}

The ideal scaling~\eqref{Eq:IdeSca} defines a special ``one-dimensional'' subspace of {\em ideal Bregman Lagrangians}:
\begin{align}\label{Eq:BregSpacAlph}
\sL_0 = \left\{\L_{\alpha,\beta,\gamma} \, \colon \, \alpha \in \sA, \,\, \beta = 2 \alpha + \int e^\alpha, \,\, \gamma = -\alpha + \int e^{\alpha} \right\} \,\subseteq\, \sL
\end{align}
where $\sA$ is the space of all scale functions $\alpha_t \in \R$, and we use the shorthand $\int e^{\alpha} \equiv \int_{0}^t e^{\alpha_s} ds$.\footnote{More generally, we can consider the halfplane defined by $\beta \le 2 \alpha + \int e^\alpha$, for which Theorem~\ref{Thm:BregRate} holds.}
Recall by Theorem~\ref{Thm:BregRate}, the Lagrangian curve $X_t$ of an ideal Lagrangian $\L_\alpha \equiv \L_{\alpha,\beta,\gamma} \in \sL_0$ has convergence rate $\rho_t = \int e^{\alpha}$.

We observe that the family of ideal Bregman Lagrangians is closed under the action of time dilation.
Indeed, the ideal scaling~\eqref{Eq:IdeSca} holds for $\alpha,\beta,\gamma$ if and only if it holds for $\alpha^{(\tau)},\beta^{(\tau)},\gamma^{(\tau)}$. Equivalently, $\L_\alpha \in \sL_0$ if and only if $\L_{\alpha^{(\tau)}} \in \sL_0$ for any $\tau \in \sT$.

Conversely, if we start from any ideal Lagrangian $\L \in \sL_0$---say the standard Nesterov Lagrangian $\L^{\star}$ ($p=2$ in~\eqref{Eq:NestLag})
---then we can reach any other ideal Lagrangian $\L_\alpha$, $\alpha \in \sA$, by choosing the time dilation function $\tau = e^{\frac{1}{2}\int e^{\alpha}}$.\footnote{Explicitly, if $\alpha^\star_t = -\log t + \log 2$ is the scale function for $\L^\star$, then we can check that
$(\alpha^\star)^{(\tau)} = \alpha^\star(\tau) + \log \dot \tau = \alpha$.}

Therefore, we conclude that the family of ideal Bregman Lagrangians~\eqref{Eq:BregSpacAlph} is an {\em orbit} under the action of the time dilation group $\sT$. That is, we can interpret $\sL_0$ as the result of speeding up {\em any} initial Lagrangian (for concreteness $\L^\star$), over all possible time dilations:
\begin{align}\label{Eq:BregSpacAlph}
\sL_0 = \left\{(\L^\star)^{(\tau)} \, \colon \, \tau \in \sT \right\}.
\end{align}
Notice, the convergence rate transforms consistently: $\L^\star$ has rate $\rho_t = 2\log t$, so when we speed it up (by $\tau =  e^{\frac{1}{2}\int e^{\alpha}}$) to $\L_\alpha$, the rate transforms to $\rho_{\tau} = 2\log \tau = \int e^{\alpha}$, as expected. Equivalently, we can also note that as $\alpha \mapsto \alpha^{(\tau)} = (\alpha \circ \tau) + \log \dot \tau$, the rate $\int e^{\alpha} = \int e^{\alpha_t} \, dt$ transforms to $\int \dot \tau e^{(\alpha \, \circ \, \tau)} dt = \int e^{\alpha} d\tau$, consistent with the idea that we simply replace time $t (=\tid)$ by $\tau$.

\subsection{The orbit of Nesterov Lagrangians}
\label{Sec:TimeNest}

We define the subgroup of {\em polynomial time dilations} $\sTpol \subseteq \sT$:
\begin{align}\label{Eq:TimePoly}
\sTpol = \{\tau_p(t) = t^p \,\colon\, p > 0 \}
\end{align}
which is isomorphic to the multiplicative group $\R_{> 0}$.
The subgroup $\sTpol$ inherits the action on $\L_0$, but the action now partitions $\L_0$ into (sub)orbits.

We observe, the family of Nesterov Lagrangians~\eqref{Eq:NestLag} forms an orbit under the action of $\sTpol$.
Indeed by~\eqref{Eq:TimeA}, if we start from $\alpha^\star = -\log t + \log 2$ (for $\L^\star$~\eqref{Eq:NestLag}), then for any $p > 0$, the time dilation $\tau(t) = t^{\frac{p}{2}}$ sends us to $(\alpha^\star)^{(\tau)} = -\log t + \log p$.
Therefore, we can view the Nesterov flows~\eqref{Eq:NestFlow} as the result of speeding up the Lagrangian flow of $\L^\star$ ($p=2$), or any starting curve.

Moreover, as we have seen in Section~\ref{Sec:AccHigherGrad}, this speedup can be implemented in discrete time with matching convergence rate as the accelerated gradient methods $\aG_p$~\eqref{Eq:AGM}. Thus,
we can interpret the family of accelerated gradient methods $\faG$ as being the result of ``speeding up'' the algorithm $\aG_2$ (accelerated gradient descent) in discrete time---which we achieve via passage to continuous time, and at the cost of higher-order smoothness assumption on $f$.

\vspace{-4pt}
\subsection{The orbit of exp-Nesterov Lagrangians, isomorphic to Nesterov Lagrangians}
\label{Sec:TimeExpNest}
\vspace{-2pt}

From the standard Nesterov Lagrangian $\L^\star$ ($p=2$ in~\eqref{Eq:NestLag}), we can use time dilation $\tau = e^{\frac{c}{2} t}$ to reach the exp-Nesterov Lagrangian $\L = \L_c$~\eqref{Eq:expLag} with $\alpha = \log c$, which has linear rate $\rho_t = ct$, $c > 0$.
Recall, from $\L^\star$ we can generate the Nesterov Lagrangians~\eqref{Eq:NestLag} as an orbit of the action of $\sTpol$. 
The dilation $\L^\star \stackrel{\tau}{\longmapsto} \L_c$ (via the exponential time dilation $\tau = e^{\frac{c}{2}t}$) generates an equivalent orbit starting from $\L_c$, which turns out to be the family of exp-Nesterov Lagrangians~\eqref{Eq:expLag}.

Concretely, we define the subgroup of {\em linear time dilations} $\sTlin \subseteq \sT$:
\vspace{-2pt}
\begin{align}\label{Eq:TimeLin}
\sTlin = \{\theta_c(t) = ct \,\colon\, c > 0 \}
\end{align}
which is isomorphic to $\sTpol$. The discussion above says that the family of exp-Nesterov Lagrangians is an orbit under the action of $\sTlin$.
This means, in a precise sense, the Nesterov and exp-Nesterov families are isomorphic.
Moreover, we can speed up Nesterov Lagrangian to get exp-Nesterov Lagrangian via an exponential time dilation function $\tau$. 
Furthermore, the associativity of the group action means we can speed up time to go from Nesterov to exp-Nesterov and back, and the results will remain consistent. 

We can summarize our discussion by saying that all triangles in the following diagram commute (and we can reverse any arrow by replacing $\tau$ with $\tau^{-1}$):
\begin{figure}[h!]
\centering
\includegraphics[width=.98\textwidth]{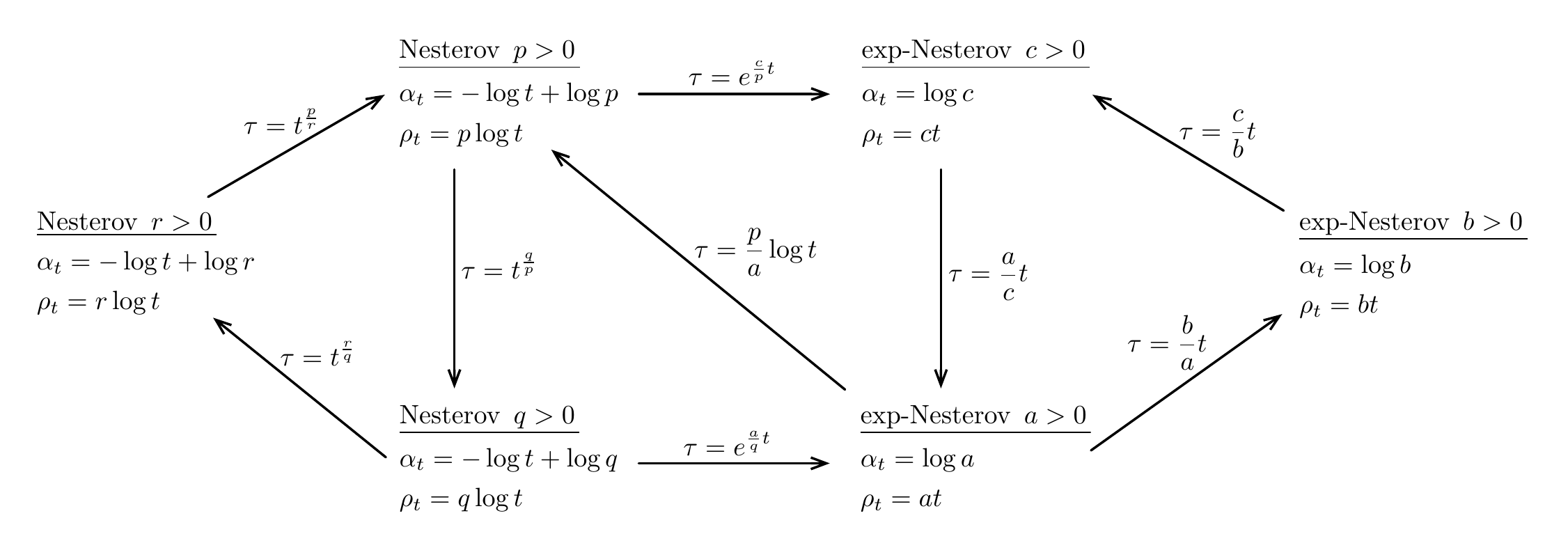}
\end{figure}

\enlargethispage{10\baselineskip}

%

\vspace{3in}


\clearpage
\appendix

\section{Natural gradient descent and mirror descent}
\label{App:MD}

We review the equivalence between natural gradient descent and mirror descent.

\subsection{Natural gradient descent and natural gradient flow}
\paragraph{Natural gradient descent.}
We can interpret natural gradient descent as the solution to a modified optimization problem, where we now measure the norm of the displacement using the Hessian metric:
\begin{subequations}\label{Eq:NatGradOpt}
\begin{align}
x_{k+1} ~&=~ x_k + v_k \notag\\
v_k & = \arg\min_{v} \left\{ f(x_k) + \langle \nabla f(x_k), v \rangle + \frac{1}{\epsilon} \cdot \frac{1}{2} \|v\|_{h(x_k)}^2 \right\}.  \label{Eq:NatGradOptb}
\end{align}
\end{subequations}
This means at each point $x$ in $\X$ we have a local inner product and norm:
\begin{align}\label{Eq:HessMet}
\langle u,v \rangle_{h(x)} = \langle u, \nabla^2 h(x) v \rangle,
\qquad
\|v\|_{h(x)}^2 = \langle v,v \rangle_{h(x)} = \langle v, \nabla^2 h(x) v \rangle.
\end{align}

\paragraph{Natural gradient flow.}
The continuous time limit ($\epsilon \to 0$ with time scaling $t = \epsilon k$) of natural gradient descent~\eqref{Eq:NatGradOpt} which we call {\em natural gradient flow} is simply gradient flow on this space:
\begin{align}\label{Eq:NatGradFlow}
\dot X_t = -\nabla^2 h(X_t)^{-1} \nabla f(X_t)
\end{align}

\paragraph{Convergence of Natural Gradient Flow}
Define the \emph{energy functional}
\[ \E_t =  t(f(X_t) - f^*) + D_h(x^*, X_t) \]
where $ x^* =\arg \min_x f(x)$ and $f^* = f(x^*)$. It has time derivative:
\[ \dot \E_t  = f(X_t) - f^* + t\langle \nabla f(X_t) ,\dot X_t \rangle - \Big\langle \frac{d}{dt} \nabla h(X_t), x^* - X_t\Big\rangle \]
Threrefore, if $X_t$ is governed by natural gradient flow~\eqref{Eq:NatGradFlow}, $\dot\E_t$ simplifies to
\begin{align}\label{Eq:MirrorFlowEdot1}
\dot \E_t = (f(X_t) - f^* + \langle  \nabla f(X_t), x^* - X_t \rangle) + t\langle \nabla f(X_t), \dot{X}_t\rangle \le t \langle \nabla f(X_t),\dot X_t\rangle \leq 0 
\end{align}
using the convexity of $f$. The energy is therefore decreasing over time: $\E_t \leq \E_0 = D(x^*, X_0)$, $t \geq 0$. Thus, we conclude that natural gradient flow~\eqref{Eq:NatGradFlow} has convergence guarantee:
\[f(X_t) - f^* \leq \frac{D_h(x^*,X_0)}{t} = O\left(\frac{1}{t}\right).\]

\subsection{Mirror descent and mirror flow}
\paragraph{Mirror descent.}
The link between natural gradient descent and mirror descent is given by the {\em Bregman divergence}. The Bregman divergence, being a first order approximation error, has the property that it approximates the Hessian norm~\eqref{Eq:HessMet} without requiring the second order derivative:
\begin{align}\label{Eq:BregmanApproxHess}
D_h(y,x) ~\approx~ \frac{1}{2} \langle y-x, \nabla^2 h(x) (y-x) \rangle = \frac{1}{2} \|y-x\|_{h(x)}^2.
\end{align}
If we replace the Hessian norm in the optimization problem~\eqref{Eq:NatGradOptb} defining natural gradient descent, then we obtain {\em mirror descent}:
\begin{align}
   x_{k+1} &= x_k + v_k \notag\\
   v_k &=  \arg\min_{v} \left\{ f(x_k) + \langle \nabla f(x_k), v \rangle + \frac{1}{\epsilon} \,D_h(x_k + v, x_k) \right\}. \label{Eq:MirrDescOpt2}
\end{align}
Setting the derivative of~\eqref{Eq:MirrDescOpt2} to zero, we can also write mirror descent explicitly as:
\begin{align}\label{Eq:MirrDesc}
\nabla h(x_{k+1}) = \nabla h(x_k) - \epsilon \nabla f(x_k).
\end{align}

\paragraph{Mirror flow.}
The continuous time limit ($\epsilon \to 0$ with time scaling $t = \epsilon k$) of mirror descent~\eqref{Eq:MirrDesc} is the {\em mirror flow}, which is the system:
\begin{subequations}\label{Eq:MirrFlow}
\begin{align}
Z_t &= \nabla h(X_t)   \label{Eq:MirrFlowa} \\
\dot Z_t &= -\nabla f(X_t)   \label{Eq:MirrFlowb}
\end{align}
\end{subequations}
Therefore, mirror flow~\eqref{Eq:MirrFlow} is equivalent to natural gradient flow~\eqref{Eq:NatGradFlow}:
\begin{align}\label{Eq:MirrNatGrad}
\frac{d}{dt}\nabla h(X_t) =  \dot Z_t = -\nabla f(X_t)
\qquad\Leftrightarrow\qquad
\dot X_t = -\nabla^2 h(X_t)^{-1} \nabla f(X_t).
\end{align}
Furthermore, mirror flow still has the same $O(1/t)$ convergence in continuous time for any convex function $f$. While mirror descent and mirror flow have matching convergence rates, it is difficult to prove any convergence rate for natural gradient descent; all we can say is that it is a descent method if $f$ is smooth with respect to $\|\cdot\|_{h(x)}$.

\subsection{Natural gradient flow and mirror flow equivalence}

In~\eqref{Eq:MirrNatGrad} we showed an ``algebraic trick'' that shows how the same differential equation can be written in two different ways, demonstrating the equivalence between mirror flow and (natural) gradient flow. Formally, we can understand this trick as the manifestation of the following property: {\em 
mirror flow is the pushforward of natural gradient flow under the mapping $\Phi = \nabla h$}. In particular, mirror flow is also a gradient flow in the ``dual manifold'' $\mathcal{Z} = \nabla h(\X)$.
To illustrate this property, we show how the gradient flow changes when we transform the space.

\paragraph{Mapping the space.}
Suppose we map $\X$ to $\Z = \Phi(\X)$ by a bijective smooth map:
$$\Phi \colon \X \to \Z$$
with inverse map (also smooth) $\Psi = \Phi^{-1} \colon \Z \to \X$. The objective function $f \colon \X \to \R$ transforms to a new objective function $\tilde f \colon \Z \to \R$ given by:
$$\tilde f = f \circ \Psi$$
so that if $z = \Phi(x)$:
\begin{align}\label{Eq:FuncPush}
\tilde f(z) = \tilde f(\Phi(x)) = f(x).
\end{align}
However, note that $\tilde f$ is not necessarily convex (in $z$), even if $f$ is (in $x$).

\paragraph{How the gradient changes.}
We will use the following notation:
\begin{align*}
\partial_x f(x_0,z_0) = \frac{\partial f(x,z)}{\partial x} \Big|_{(x,z) = (x_0,z_0)}
\end{align*}
The new objective function $\tilde f$ now has gradient at $z_0$ (in the new space $\Z$):
\begin{align}\label{Eq:GradPush}
\nabla \tilde f(z_0) = \partial_z \tilde f(z_0) = \partial_z (f \circ \Psi)(z_0) = J_{\Psi}(z_0) \, \partial_x f(\Psi(z_0)) = J_\Psi(z_0)  \, \nabla f(\Psi(z_0))
\end{align}
where $J_\Psi(z_0)$ is the Jacobian (partial derivatives) of $\Psi(z)$ at $z = z_0$, represented as a matrix:
\begin{align}
\big( J_\Psi(z_0) \big)_{ij} = \partial_i \big(\Psi(z_0)\big)_j = \frac{\partial \Psi_j(z)}{\partial z_i} \Big|_{z = z_0}.
\end{align}

\paragraph{How the metric changes.}
Suppose $\X$ has metric $\gm(x)$ at point $x$ (e.g., Hessian metric $\gm = \nabla^2 h$). Then we can obtain a corresponding metric in $\Z = \Phi(\X)$ which is the {\em pullback metric} of $\gm$ under the inverse mapping $\Psi = \Phi^{-1}$:
\begin{align*}
\tilde{\gm} = \Psi^* \gm.
\end{align*}
Explicitly, this means the inner product at the point $z_0 = \Phi(x_0)$ is given by:
\begin{align*}
\langle u,v \rangle_{z_0}
 ~=~ \langle J_\Psi(z_0) u, \, J_\Psi(z_0) v \rangle_{x_0}
 ~=~ \langle J_\Psi(z_0) u, \gm(x_0) J_\Psi(z_0) v \rangle.
\end{align*}
That is, the metric $\gm(x_0)$ at $x_0 = \Psi(z_0)$ now becomes the metric $\tilde{\gm}(z_0)$ at $z_0$:
\begin{align}\label{Eq:PullbackMet}
\tilde{\gm}(z_0)
 ~=~ (\Psi^* \gm)(z_0)
 ~=~ J_\Psi(z_0)^\top \, (\gm \circ \Psi)(z_0) \, J_\Psi(z_0).
 \end{align}

\paragraph{How the gradient flow changes.}
Suppose in the original space $\X$ we have gradient flow, which (with the general definition of metric $\gm$) is given by:
\begin{align*}
\dot X_t = -\gm(X_t)^{-1} \nabla f(X_t).
\end{align*}
In the new space $\Z = \Phi(\X)$ with the pushforward metric $\tilde\gm = \Psi^* \gm$ and the new objective function $\tilde f = f \circ \Psi$, the natural gradient flow equation~\eqref{Eq:NatGradFlow} becomes:
\begin{align*}
\dot Z_t = -\tilde\gm(Z_t)^{-1} \nabla \tilde f(Z_t).
\end{align*}
Plugging in~\eqref{Eq:PullbackMet} then gives us:
\begin{align}
\dot Z_t &= - \big[ J_\Psi(Z_t)^\top \, \gm(\Psi(Z_t)) \, J_\Psi(Z_t) \big]^{-1} \: J_\Psi(Z_t) \, \nabla f(\Psi(Z_t)) \notag \\
&= - J_\Psi(Z_t)^{-1} \, \gm(\Psi(Z_t))^{-1} \, \nabla f(\Psi(Z_t)). \label{Eq:PushGradFlow}
\end{align}

\paragraph{Mirror map from Legendre duality.}
Now consider the Hessian metric again:
$$\gm = \nabla^2 h.$$
In this case, there is a very nice choice of $\Psi$, called the {\em mirror map}:
$$\Psi = \nabla h^*.$$
Here $h^* \colon \X^* \to \X$ is the {\em Legendre dual function}, defined on the space of linear functionals $\X^*$:
\begin{align}\label{Eq:LegDual}
h^*(z) = \sup_{x} \;  \langle z,x \rangle - h(x).
\end{align}
The optimum in~\eqref{Eq:LegDual} is achieved by $x$ satisfying $z = \nabla h(x)$. So for all $x \in \X$, we have the relation:
\begin{align}\label{Eq:LegDualx}
h^*(\nabla h(x)) = \langle \nabla h(x), x \rangle - h(x).
\end{align}
Similarly, since $(h^*)^* = h$, for all $z \in \Z$ we also have:
\begin{align}\label{Eq:LegDualz}
h(\nabla h^*(z)) = \langle \nabla h^*(z), z \rangle - h^*(z).
\end{align}
Comparing~\eqref{Eq:LegDualx} and~\eqref{Eq:LegDualz}, we conclude:
\begin{align*}
z = \nabla h(x)
\qquad\Leftrightarrow\qquad
x = \nabla h^*(z)
\end{align*}
which means $\nabla h^* = (\nabla h)^{-1}$, so for all $z \in \Z$:
$$\nabla h(\nabla h^*(z)) = z.$$
Differentiating (calculating the Jacobian with respect to $z$) of the expression above gives us:
$$\nabla^2 h^*(z) \: \nabla^2 h(\nabla h^*(z)) = I.$$
So with the Hessian metric $\gm = \nabla^2 h$ and the mirror map $\Psi = \nabla h^*$ ($\Phi = \nabla h$), we have:
\begin{align*}
J_\Psi(Z_t) &= \partial_z \nabla h^*(Z_t) = \nabla^2 h^*(Z_t) \\
\gm(\Psi(Z_t)) &= \nabla^2 h(\nabla h^*(Z_t)) = \left(\nabla^2 h^*(Z_t) \right)^{-1}
\end{align*}
Notice how the choice of the mirror map makes $J_\Psi(Z_t)$ and $\gm(\Psi(Z_t))$ cancel each other. Therefore, the pushforward of the natural gradient flow~\eqref{Eq:PushGradFlow} in $\Z$ is indeed the mirror flow~\eqref{Eq:MirrFlow}:
\begin{align*}
\dot Z_t = - \nabla f(X_t)
\end{align*}

\subsection{Natural gradient flow as massless limit of Hessian Lagrangian flow}
\label{Sec:NaturalMasslessLim}
We consider the damped Lagrangian using the Hessian metric:
\begin{equation}\label{Eq:HessLagrangian}
\L(X_t,\dot X_t, t) = e^{\gamma_t}\Big(\frac{m}{2}||\dot X_t||_{h(X_t)}^2 - f(X_t)\Big) 
\end{equation}
The Euler-Lagrange equation corresponding to \eqref{Eq:HessLagrangian} is:
\begin{equation}\label{Eq:ELHessLag}
\frac{m}{2}\nabla^3h(X_t)\dot X_t \dot X_t + \nabla^2h(X_t)(m\ddot X_t + m \dot \gamma_t \dot X_t) + \nabla f(X_t) = 0
\end{equation}
when $\gamma_t = t/m$, then in the limit $m\rightarrow 0$, the equation \eqref{Eq:ELHessLag} converges to the first order equation
\[ \nabla^2 h(X_t)\dot X_t +  \nabla f(X_t) = 0 \]
which is equivalent to the natural gradient flow \eqref{Eq:NatGradFlow}

\section{Deferred proofs}

\subsection{Proof of Lemma~\ref{Lem:Resd}}
\label{App:ConvHigherGrad}

We prove Lemma~\ref{Lem:Resd}, following the technique of~\cite[Theorem~1]{Nesterov08}.
Since $f$ is $\frac{(p-1)!}{\epsilon}$-smooth of order $p-1$, and we define $x_{k+1}$ by~\eqref{Eq:HigherGrad}, we have the following bound:
\begin{align}\label{Eq:DiscGradBound1}
f(x_{k+1})
  \:\le\: \min_{x} \left\{ f(x) + \frac{2}{\epsilon} \cdot \frac{1}{p} \|x-x_k\|^p \right\}.
\end{align}
Moreover, choosing $v = x^* - x_k$ in~\eqref{Eq:DiscGradBound1} gives us the bound:
\begin{align*}
f(x_{k+1}) - f^* \le \frac{2}{\epsilon} \cdot \frac{1}{p} \|x^* - x_k\|^p.
\end{align*}
For any $\lambda \in (0,1)$, consider the midpoint:
$$x_\lambda = x^* + (1-\lambda)(x_k-x^*) = \lambda x^* + (1-\lambda) x_k.$$
Then from the convexity of $f$~\eqref{Eq:Conv}, we have the bound $f(x_\lambda) \le \lambda f^* + (1-\lambda) f(x_k)$.
Plugging this to~\eqref{Eq:DiscGradBound1} with the choice $v = x_\lambda - x_k$, we find:
\begin{align*}
f(x_{k+1})
\:&\le\: f(x_\lambda) + \frac{2}{\epsilon} \cdot \frac{1}{p} \|x_\lambda-x_k\|^p
\:\le\: \lambda f^* + (1-\lambda) f(x_k) + \frac{2}{\epsilon} \cdot \frac{1}{p} R^p \, \lambda^p. 
\end{align*}
Then with our notation $\delta_k = f(x_k) - f^*$, we can write the last inequality above more precisely as:
\begin{align*}
\delta_{k+1} \:\le\: (1-\lambda) \delta_k + \frac{2}{\epsilon} \cdot \frac{1}{p} R^p \, \lambda^p.
\end{align*}
Minimizing the right hand side with respect to $\lambda$ yields the optimal bound:
\begin{align}\label{Eq:DeltaBound1}
\delta_{k+1} \le \delta_k - \frac{(p-1)}{p} \cdot \left(\frac{\epsilon \delta_k^p}{2 R^p}\right)^{\frac{1}{p-1}}.
\end{align}
Now consider the energy functional $e_k = \delta_k^{-\frac{1}{p-1}}$; we have:
\begin{align}
e_{k+1} - e_k
  \:=\: \frac{1}{ \delta_{k+1}^{\frac{1}{p-1}}} - \frac{1}{ \delta_k^{\frac{1}{p-1}}} 
  \:&=\:  \frac{\delta_{k}^{\frac{1}{p-1}}-\delta_{k+1}^{\frac{1}{p-1}}}{\delta_{k+1}^{\frac{1}{p-1}} \cdot \delta_{k}^{\frac{1}{p-1}}}  
  \:=\: \frac{\delta_k-\delta_{k+1}}{\delta_{k+1}^{\frac{1}{p-1}} \cdot \delta_{k}^{\frac{1}{p-1}}} \cdot \frac{1}{\big( \sum_{i=0}^{p-2} \delta_k^{\frac{i}{p-1}} \cdot \delta_{k+1}^{\frac{p-2-i}{p-1}} \big)}. \label{Eq:E3Bound1}
\end{align}
We know that $\G_p$ is a descent method, so 
$e_{k+1} \ge e_k$. The summation in the denominator of~\eqref{Eq:E3Bound1} can be bounded above by $(p-1) \delta_k^{\frac{p-2}{p-1}}$, and we can lower bound~$\delta_k-\delta_{k-1}$ using~\eqref{Eq:DeltaBound1}. Therefore: 
\begin{align}\label{Eq:E3Bound2}
e_{k+1}-e_k
  \:\ge\: \frac{(p-1)}{p} \cdot \left(\frac{\epsilon \delta_k^{p}}{2R^{p}}\right)^{\frac{1}{p-1}} \cdot \frac{1}{\delta_k^{\frac{2}{p-1}}} \cdot \frac{1}{(p-1) \delta_k^{\frac{p-2}{p-1}}}
  \:=\: \frac{1}{p} \cdot \left(\frac{\epsilon}{2R^p}\right)^{\frac{1}{p-1}}.
\end{align}
Summing~\eqref{Eq:E3Bound2} and telescoping the terms, we get:
\begin{align*}
e_k \:\ge\: e_k-e_0 \:\ge\: \frac{k}{p} \cdot \left(\frac{\epsilon}{2R^p}\right)^{\frac{1}{p-1}}
\end{align*}
which gives us the conclusion. \qed

\subsection{Proof of Theorem~\ref{Thm:RescaledGradFlow}}
\label{App:RescaledGrad}
Consider the energy functional \eqref{Eq:RescGradFlowEnergy}:
\begin{align}\label{Eq:E2}
E_t = \big(f(X_t)-f^*\big)^{-\frac{1}{p-1}}.
\end{align}
Its time derivative is:
\begin{align*}
\dot E_t
 \:&=\: -\frac{1}{(p-1)} \cdot \frac{\langle \nabla f(X_t), \dot X_t \rangle}{(f(X_t)-f^*)^{\frac{p}{p-1}}} 
\end{align*}
If $X_t$ evolves following the rescaled gradient flow~\eqref{Eq:RescGradFlow2}, then $\dot E_t$ simplifies to:
\begin{align}\label{Eq:E2dot}
\dot E_t
  \:&=\: \frac{1}{(p-1)} \cdot \left(\frac{\|\nabla f(X_t)\|_*}{f(X_t)-f^*}\right)^{\frac{p}{p-1}}.
\end{align}
Notice that by the convexity of $f$, we have
\begin{align*}
0 \le f(X_t)-f^* \le \langle \nabla f(X_t), X_t-x^* \rangle \le \|\nabla f(X_t)\|_* \cdot \|X_t-x^*\|
\end{align*}
and therefore, from~\eqref{Eq:E2dot} we obtain a bound:
\begin{align*}
\dot E_t
  \:\geq\: \frac{1}{(p-1)} \cdot \frac{1}{\|X_t-x^*\|^{\frac{p}{p-1}}}.
\end{align*}
Integrating, this gives a lower bound on $E_t$:
\begin{align}\label{Eq:E2dot1}
E_t \:\geq\: E_0 + \frac{1}{(p-1)} \int_0^t \frac{1}{\|X_\tau-x^*\|^{\frac{p}{p-1}}} d\tau.
\end{align}
Furthermore, under the additional mild assumption that the level sets are bounded, the bound~\eqref{Eq:E2dot1} simplifies to:
\begin{align*}
E_t \:\geq\: E_0 + \frac{t}{(p-1) \, R^{\frac{p}{p-1}}}.
\end{align*}
Now recalling the definition~\eqref{Eq:E2}, this lower bound becomes an upper bound on the function values:
\begin{align*}
f(X_t) - f^* \:\le\: \left(E_0 + \frac{t}{(p-1) R^{\frac{p}{p-1}}} \right)^{-(p-1)}.
\end{align*}
Finally, replacing $E_0 \ge 0$ by $0$ completes the proof. \qed

\subsection{Proof of Lemma~\ref{Lem:HigherGrad}}
\label{App:LemHigherGrad}

We follow the proof of~\cite[Lemma~6]{Nesterov08}. Since $y_k$ solves the optimization problem~\eqref{Eq:AGMy}, it satisfies the optimality condition:
\begin{align}\label{Eq:HigherGradLem1}
\sum_{i=1}^{p-1} \frac{1}{(i-1)!} \nabla^i f(x_k) \, (y_k-x_k)^{i-1} + \frac{2}{\epsilon} \|y_k-x_k\|^{p-2} \, (y_k-x_k) = 0.
\end{align}
Furthermore, since $\nabla^{p-1} f$ is $\frac{(p-1)!}{\epsilon}$-Lipschitz, we have the following error bound on the $(p-2)$-nd order Taylor expansion of $\nabla f$ :
\begin{align}\label{Eq:HigherGradLem2}
\left\|\nabla f(y_k) - \sum_{i=1}^{p-1} \frac{1}{(i-1)!} \nabla^i f(x_k) \, (y_k-x_k)^{i-1}\right\|_* \le \frac{1}{\epsilon} \|y_k-x_k\|^{p-1}.
\end{align}
Substituting~\eqref{Eq:HigherGradLem1} to the square of~\eqref{Eq:HigherGradLem2} and writing $r = \|y_k-x_k\|$, we obtain:
\begin{align}
\frac{r^{2p-2}}{\epsilon^2}
\,\ge\, \left\|\nabla f(y_k) + \frac{2r^{p-2}}{\epsilon} \, (y_k-x_k)\right\|_*^2. \notag
\end{align}
Upon expanding the square and rearranging, we get the inequality:
\begin{align}\label{Eq:HigherGradLem3}
\langle \nabla f(y_k), x_k-y_k \rangle
\,\ge\, \frac{\epsilon}{4r^{p-2}} \|\nabla f(y_k)\|_*^2 + \frac{3r^p}{4\epsilon}.
\end{align}
Note that if $p=2$, then the first term in~\eqref{Eq:HigherGradLem3} above already implies the desired bound~\eqref{Eq:HigherGradLem}. Now assume $p \ge 3$. The right hand side of~\eqref{Eq:HigherGradLem3} is a convex function of $r$, and it is minimized by $r^* = 
\big\{\frac{p-2}{3p} \, \epsilon^2 \|\nabla f(y_k)\|_*^2\big\}^{\frac{1}{2p-2}}$, yielding a lower bound of~\eqref{Eq:HigherGradLem3} that is now independent of $r$:
\begin{align}
\langle \nabla f(y_k), x_k-y_k \rangle
\,&\ge\, \frac{1}{4} \left(\left(\frac{3p}{p-2}\right)^{\frac{p-2}{2p-2}} + \left(\frac{p-2}{3p}\right)^{\frac{p}{2p-2}} \right) 
\epsilon^{\frac{1}{p-1}} \|\nabla f(y_k)\|_*^{\frac{p}{p-1}} \notag \\
&\ge\, \frac{1}{4} \, \epsilon^{\frac{1}{p-1}} \|\nabla f(y_k)\|_*^{\frac{p}{p-1}}  \notag
\end{align}
as desired. \qed

\clearpage
\bibliography{acceleration_arxiv.bbl}

\end{document}